\documentclass[12pt]{article}
\usepackage{amsmath,amsthm,amsfonts,amssymb,geometry}
\usepackage[]{xcolor}
\usepackage{mathtools,hyperref,enumerate}

\DeclareMathOperator{\CVP}{CVP}
\newcommand{\card}[1]{\left|#1\right|}
\newcommand{\nroot}{\zeta_n} 
\newcommand{\droot}{\zeta_d}
\newcommand{\bS}{\bar{S}}
    
\newtheorem{theorem}{Theorem}[section]
\newtheorem{proposition}[theorem]{Proposition}
\newtheorem{lemma}[theorem]{Lemma}
\newtheorem{corollary}[theorem]{Corollary}
\newtheorem{conjecture}[theorem]{Conjecture}

\newtheorem{problem}[theorem]{Problem}

\theoremstyle{definition}
\newtheorem{definition}{Definition}
\newtheorem{example}{Example}

\title{Necklaces, subset sums, and cyclic permutations}
\author{Robert Dougherty-Bliss \and Sergi Elizalde}
\date{\today}

\begin{document}

\maketitle

\begin{abstract}
    It is a well known that, for odd $n$, the number of subsets of $\{1,2,\dots,n\}$ the sum of whose elements is divisible by $n$ equals the number of binary necklaces of length $n$. In this paper generalize this result in two directions.

    On the one hand, we introduce a parameter $r$ so that requiring the subset sums to be congruent to $r$ modulo $n$ translates into imposing some periodicity conditions on the necklaces. 
    On the other hand, we refine these relations by the size $k$ of the subset, showing that it matches the number of ones in the necklace.
    We describe the precise conditions on $n$, $k$ and $r$ for which the equalities hold. We also extend some of our formulas to $q$-ary necklaces.

    The classical results correspond to the case $r=0$. When $r=1$, our identity is related to a conjecture of Baker et al.\ connecting subsets the sum of whose elements is congruent to $1$ modulo $n$ and unimodal permutations which consist of one cycle. We prove this conjecture using generating functions.

    Finding bijective proofs of most of our identities remains an open problem.
\end{abstract}

\section{Introduction}\label{sec:intro}

As shown by Stanley \cite[Exer.~1.105]{EC1}, the number of subsets of $[n]\coloneqq\{1,2,\dots,n\}$ the sum of whose elements is divisible by $n$ is equal to
\begin{equation}\label{eq:subsets_odd}
    \frac{1}{n} \sum_{\substack{d \mid n \\ d \text{ odd}}} \phi(d) 2^{n / d},
\end{equation}
where $\phi$ is Euler's totient function. It is well known that, when $n$ is
odd, this formula also counts (binary) necklaces of length $n$, which are
equivalence classes of binary words under rotation. Stanley posed the challenge
of finding a bijective proof of this fact, which was eventually done by Chan in~\cite{Chan}.

A similar formula, as well as another instance of a connection between subset sums and necklaces, appeared recently 
in work of Baker, Chen,
Li, and Qian~\cite{Baker}. They were studying the \emph{Gleason polynomials}
$G_n(c)$, recursively defined by the formula
\begin{equation*}
    \prod_{d \mid n} G_k(c) = \underbrace{(((c^2 + c)^2 + c)^2 + \cdots + c)^2 + c}_{n - 1 \text{ iterates}}.
\end{equation*}
Buff et al.~\cite{Buff} had proved that, for any positive integer $n$, the number of real roots of $G_n(c)$ is equal to the number of irreducible factors of $G_n(c)$ when considered as a polynomial over the two-element finite field, by showing that both sets are enumerated by
\begin{equation}\label{eq:necklaces_odd}
    \frac{1}{2n} \sum_{\substack{d \mid n \\ d \text{ odd}}} \mu(d) 2^{n / d},
\end{equation}
where $\mu$ is the M\"obius function. Baker et al. gave an explicit bijection between these sets, using several intermediate objects, all of which are counted by equation~\eqref{eq:necklaces_odd}.

One of such objects are primitive necklaces, which are necklaces all of whose rotations are different. It is known that equation~\eqref{eq:necklaces_odd} counts primitive necklaces of length $n$ with an odd number of ones. Another such object are V-shaped permutations of $[n]$ (i.e., permutations with a unique local minimum) which are cycles,
also known~\cite{ArcEli} to be enumerated by equation~\eqref{eq:necklaces_odd}.
Baker et al.~\cite{Baker} noticed that, according to~\cite[A000048]{oeis}, there is yet another object counted by equation~\eqref{eq:necklaces_odd}: subsets of $[n-1]$ whose elements sum to $1\bmod n$. 
They conjectured that the following refinement holds.

\begin{conjecture}[\cite{Baker}]\label{conj:Baker} The number of cyclic V-shaped permutations of $[n]$ whose minimum is at position $k$ equals the number of $(k-1)$-element subsets of $[n-1]$ whose elements sum to $1\bmod n$.
\end{conjecture}

Our first main result (Theorem~\ref{thm:S=CVP}) is a proof of this conjecture. We also show, by applying a bijection from~\cite{ArcEli,Ste}, that the connection that was observed in~\cite{Baker} between primitive necklaces and subsets that sum to $1\bmod n$ admits a natural refinement where the number of ones in the necklace corresponds to the size of the subset. The analogous refinement also applies to the equality from \cite[Exer.~1.105]{EC1} involving arbitrary necklaces
and subsets whose sum is divisible by~$n$.

In fact, as we will see in Theorem~\ref{thm:odd_nk}, both of these results are special cases of a
more general framework relating necklaces whose period satisfies certain
divisibility conditions, and subsets whose sum is congruent to a fixed value
modulo~$n$.

The remainder of this paper is structured as follows. In Section~\ref{sec:definitions} we give the necessary background on necklaces, permutations, and subset sums, and we introduce some notation. We also describe, in Section~\ref{sec:r=0}, the classical connection between subsets of $[n]$ whose sum is divisible by $n$ and necklaces of length~$n$, when $n$ is odd, as well as a recent refinement by the size of the subset, which can be viewed as inspiration for our work.
In Section~\ref{sec:r=1} we prove Conjecture~\ref{conj:Baker}, and then we use known bijections between permutations and necklaces to deduce a connection between subsets of $[n]$ whose sum is equal to $1\bmod n$ and primitive necklaces of length~$n$.

In Section~\ref{sec:general}, we put the two connections described above under the same umbrella, proving a more general result that relates subsets of $[n]$ whose sum is equal to $1\bmod n$ and necklaces whose co-period (defined in Section~\ref{sec:necklaces}) divides~$r$. As in the previous cases, the size of the subset $k$ corresponds to the number of ones in the necklace. In Theorem~\ref{thm:Nr=S'r}, we describe the precise conditions on $n$, $r$, and $k$ for which the equality holds.

In Section~\ref{sec:q} we extend some of our results to $q$-ary necklaces and multisets, refining and generalizing an identity found by Chan~\cite{Chan}.
Finally, in Section~\ref{sec:open}, we list several open questions that our work raises, mostly about finding bijective proofs of several equalities relating necklaces, subset sums, and permutations.

\section{Definitions and background}
\label{sec:definitions}

\subsection{Necklaces}\label{sec:necklaces}

Two binary words $u,v$ of length $n$ are conjugate if they are cyclic rotations of each other, that is, there exist words $r$ and $s$ such that $u=rs$ and $v=sr$. A {\em binary necklace} of length $n$ is a conjugacy class of binary words of length $n$; we write $(u)$ to denote the conjugacy class of the word $u$. Throughout the paper, we will use the term {\em necklace} to refer to a binary necklace, i.e., over the alphabet $\{0,1\}$.  A word $u$ is {\em primitive} if it is not the power of another word, i.e., it is not of the form $u=v^j$ for $j\ge2$. If $u=v^j$ and $v$ is primitive, we call $j$ the {\em co-period} of $u$.
A necklace is {\em primitive} if it is the conjugacy class of a primitive word.

A {\em Lyndon word} is a primitive word that is lexicographically smaller than all the other words in its conjugacy class. Lyndon words are in one-to-one correspondence with primitive necklaces, since each conjugacy class of primitive words has a unique lexicographically smallest element.

\begin{definition}
    For any integers $0\le k\le n$ and $0\le r< n$, define the following sets:
    \begin{itemize}     
        \item $N(n, k)$ is the set of binary necklaces of length $n$ with $k$ ones. 
        \item $L(n,k)$ is the set of necklaces in $N(n,k)$ that are primitive; equivalently, the set of binary Lyndon words of length $n$ with $k$ ones.
        \item $N_r(n, k)$ is the set of necklaces in $N(n,k)$ whose co-period divides $r$. In particular, $N_0(n, k)=N(n, k)$ and
        $N_1(n, k)=L(n, k)$.
    \end{itemize}
\end{definition}

If $A(n,k)$ is any two-parameter family of sets, define 
\begin{equation}\label{eq:A}
A(n)=\bigcup_{k=0}^n A(n,k).
\end{equation}

The enumeration of necklaces is a well-studied problem in combinatorics. Here are some classical formulas for binary necklaces.

\begin{lemma}\label{lem:NL} 
For any $0\le k\le n$, we have
\begin{align*}\card{N(n)}=\frac{1}{n}\sum_{d\mid n}\phi(d) 2^{n/d}, &&
\card{N(n,k)}=\frac{1}{n}\sum_{d\mid \gcd(n,k)}\phi(d) \binom{n/d}{k/d},\\
\card{L(n)}=\frac{1}{n}\sum_{d\mid n}\mu(d) 2^{n/d}, &&
\card{L(n,k)}=\frac{1}{n}\sum_{d\mid \gcd(n,k)}\mu(d) \binom{n/d}{k/d}.
\end{align*}
\end{lemma}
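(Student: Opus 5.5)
We prove the four formulas by counting binary words by their co-period (equivalently, by their primitive root), and then passing between words and necklaces with Burnside's lemma and Möbius inversion. The two formulas for $N$ come from Burnside, those for $L$ from Möbius inversion, and in each case the ``with $k$ ones'' version is obtained by keeping track of the number of ones.

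\emph{Step 1: Burnside for $N(n,k)$ and $N(n)$.} The cyclic group $\mathbb{Z}/n$ acts on the set of binary words of length $n$ with $k$ ones by rotation, and $N(n,k)$ is its set of orbits. Rotation by $s$ fixes a word exactly when the word has period $g=\gcd(s,n)$, i.e.\ it is $v^{n/g}$ for some word $v$ of length $g$. For the word to contain $k$ ones, $v$ must contain $k g/n$ ones, so we need $d=n/g$ to divide $k$. In that case there are $\binom{n/d}{k/d}$ fixed words.

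Exactly $\phi(d)$ values of $s\in\mathbb{Z}/n$ have $n/\gcd(s,n)=d$. Averaging over the group gives
\[
\card{N(n,k)}=\frac1n\sum_{d\mid\gcd(n,k)}\phi(d)\binom{n/d}{k/d}.
\]
The formula for $\card{N(n)}$ follows the same way: drop the constraint on ones, so each $d\mid n$ contributes $2^{n/d}$ fixed words. Alternatively, sum the previous formula over $k$ and use $\sum_{j}\binom{n/d}{j}=2^{n/d}$.

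\emph{Step 2: Möbius inversion for $L(n,k)$.} Every word of length $n$ with $k$ ones can be written uniquely as $v^{d}$ with $v$ primitive, where $d\mid\gcd(n,k)$ and $v$ has length $n/d$ with $k/d$ ones. A primitive word has exactly $n/d$ distinct rotations, so there are $\frac nd\card{L(n/d,k/d)}$ primitive words of length $n/d$ with $k/d$ ones. Counting all words therefore gives
\[
\binom nk=\sum_{d\mid\gcd(n,k)}\frac nd\,\card{L(n/d,k/d)}.
\]
Write $g=\gcd(n,k)$ and fix the ratio $(n/g,k/g)$. Set $F(m)=\binom{mn/g}{mk/g}$ and $G(m)=\frac{mn}{g}\card{L(mn/g,mk/g)}$ for $m\mid g$. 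The identity above then reads $F(m)=\sum_{e\mid m}G(e)$. Möbius inversion gives $G(g)=\sum_{d\mid g}\mu(d)F(g/d)$, which is exactly
\[
\card{L(n,k)}=\frac1n\sum_{d\mid\gcd(n,k)}\mu(d)\binom{n/d}{k/d}.
\]
The formula for $\card{L(n)}$ follows in the same way from $2^n=\sum_{d\mid n}\frac nd\card{L(n/d)}$, or by summing over $k$.

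The main obstacle is the bookkeeping in the Möbius inversion for fixed $k$. The ratio $k/n$ must stay fixed, so the inversion runs over the divisors of $\gcd(n,k)$ rather than over the divisors of $n$, and one has to check that the indexing by $m\mid g$ makes the inversion valid. Everything else is routine.
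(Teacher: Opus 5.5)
Your proposal is correct and follows essentially the same route as the paper's proof. Both use Burnside's lemma for $\card{N(n)}$ and $\card{N(n,k)}$, and both Möbius-invert $\binom{n}{k}=\sum_{d\mid\gcd(n,k)} g(n/d,k/d)$, where $g(n,k)=n\card{L(n,k)}$ counts primitive words, to get $\card{L(n)}$ and $\card{L(n,k)}$. The difference is only in detail: you write out the fixed-point count and the re-indexing over divisors of $\gcd(n,k)$ with the ratio $(n/g,k/g)$ held fixed, both of which the paper leaves implicit.
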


\begin{proof}
    The formulas for $N(n)$ and $N(n,k)$ are classical applications of Burnside's lemma, see e.g.~\cite{StanleyAC}.
    
    The formulas for $L(n)$ and $L(n,k)$ are obtained using M\"obius inversion~\cite{Lothaire}. For example, to prove the last equality, let $f(n,k)$ be the number of binary words of length $n$ with $k$ ones, and let $g(n,k)$ be the number of such words that are primitive. Clearly, $f(n,k)=\binom{n}{k}$. Since any word can be written uniquely as a power of a primitive word, we have $f(n,k)=\sum_{d\mid\gcd(n,k)}g(n/d,k/d)$. Thus, by M\"obius inversion,
    $$g(n,k)=\sum_{d\mid\gcd(n,k)}\mu(d)f(n/d,k/d)=\sum_{d\mid\gcd(n,k)}\mu(d)\binom{n/d}{k/d}.$$
    Finally, use that $\card{L(n,k)}=g(n,k)/n$.
\end{proof}

\subsection{Permutations}

A permutation $\sigma$ of $[n]=\{1,2,\dots,n\}$ is called {\em V-shaped} if $$\sigma_1>\sigma_2>\dots>\sigma_k<\sigma_{k+1}<\dots<\sigma_n$$ for some $k\in[n]$. Even though these permutations are called {\em unimodal} in~\cite{Baker}, we prefer to reserve the term unimodal for permutations such that $$\sigma_1<\sigma_2<\dots<\sigma_k>\sigma_{k+1}>\dots>\sigma_n$$ for some $k\in[n]$, following~\cite{Thibon,ArcEli}. Note that the reverse-complement operation $\sigma_1\dots\sigma_n\mapsto (n+1-\sigma_n)\dots(n+1-\sigma_1)$ takes unimodal permutations to V-shaped permutations, while preserving the cycle structure.

A cyclic permutation is a permutation that consists of a single cycle. There are $(n-1)!$ cyclic permutations of $[n]$.

\begin{definition}
    For any $1\le k\le n$, let $\CVP(n, k)$ be the set of cyclic V-shaped permutations of $[n]$ that have their minimum in position $k$; we call these {\em cyclic V-shaped permutations}.
\end{definition}

\subsection{Subset sums}
\begin{definition}
    For any integers $0\le k\le n$ and $0\le r< n$, define the following sets:
    \begin{itemize}
    \item $S_r(n, k)$ is the set of $k$-element subsets of $[n - 1]$ whose elements sum to $r \bmod n$.
    \item $\bS_r(n, k)$ is the set of $k$-element subsets of $[n]$ whose elements sum to $r \bmod n$.
    \end{itemize}
Define also $S_r(n)$ and $\bS_r(n)$ as in equation~\eqref{eq:A}.
\end{definition}

By identifying $n$ and $0$, one could equivalently define $\bS_r(n, k)$ as the set of $k$-element subsets of $\{0,1,\dots,n-1\}$ whose elements sum to $r \bmod n$.

For any $1\le k\le n$, there is a trivial bijection between $\bS_r(n,k)$ and $S_r(n,k)\cup S_r(n,k-1)$, obtained by separating the sets in $\bS_r(n,k)$ into those that do not contain $n$ and those that do. It follows that
\begin{equation}\label{eq:SS'}
\card{\bS_r(n,k)}=\card{S_r(n,k)}+\card{S_r(n,k-1)}.
\end{equation}

\subsection{The $r=0$ case: arbitrary necklaces}
\label{sec:r=0}

Subsets of $[n]$ whose elements sum to $0\bmod n$ were considered in Exercise 105a in Chapter 1 of~\cite{EC1}. Stanley shows that
\begin{equation}    \label{eq:S0'}
    \card{\bS_0(n)}=\frac{1}{n} \sum_{\substack{d \mid n \\ d \text{ odd}}} \phi(d) 2^{n / d}
\end{equation}
for all $n$.
When $n$ is odd, this coincides with the formula for $\card{N(n)}$ given in
Lemma~\ref{lem:NL}. Part (b) of the same exercise poses the problem of finding a
bijective proof of this result. A bijection between $\bS_0(n)$ and $N(n)$ for any
odd $n$ was eventually found by Chan~\cite{Chan}, along with generalizations to
$q$-ary necklaces and multisets of $[n]$ where the multiplicity of each element
is less than~$q$.

It was noted in~\cite{Chan} that the equality $\card{\bS_0(n)}=\card{N(n)}$ for odd $n$
admits a natural refinement, namely $\card{\bS_0(n,k)}=\card{N(n,k)}$ for any $0\le k\le
n$. Chan's bijection does not provide a bijective proof of this. In Theorem~\ref{thm:Nr=S'r} we will
characterize all values of $k$ and $n$ for which these cardinalities coincide.

\begin{problem}\label{prob:Chan}
    Find a bijection between $\bS_0(n,k)$ and $N(n,k)$ for any odd $n$ and any $0\le k\le n$. 
\end{problem}

\section{The $r=1$ case: primitive necklaces}
\label{sec:r=1}

\subsection{A conjecture about subset sums and permutations}

In their study of the roots of Gleason polynomials, Baker et al.~\cite{Baker} found an intriguing connection between subsets of $[n-1]$ whose elements sum to $1\bmod n$ and V-shaped permutations on $[n]$. They noticed that both sets have the same cardinality, and they conjectured that the equality could be refined by keeping track of the position of the minimum in the permutation on the one hand, and the size of the subset on the other hand. In this section we will prove this conjecture, stated above as Conjecture~\ref{conj:Baker}.

\begin{theorem}\label{thm:S=CVP} For any $1\le k\le n$, we have
    $$\card{S_1(n, k-1)} = \card{\CVP(n, k)}.$$
In particular, summing over $k$, we have $\card{S_1(n)}=\card{\CVP(n)}$.
\end{theorem}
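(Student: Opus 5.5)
Both sides will be reduced to explicit divisor sums, which are then compared. The paper says it proves the conjecture ``using generating functions,'' and I will follow that route.

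\emph{Subset side.} By the roots-of-unity filter,
\[
\card{S_1(n,j)}=\frac1n\sum_{s=0}^{n-1}\zeta^{-s}\,[t^j]\prod_{i=1}^{n-1}(1+\zeta^{si}t), \qquad \zeta=\nroot .
\]
For a fixed $s$, let $d=n/\gcd(n,s)$, so that $\zeta^{s}$ is a primitive $d$-th root of unity. The product over $i\in[n]$ then equals $(1-(-t)^d)^{n/d}$. Dividing out the factor with $i=n$, which is $(1+t)$, gives
\[
\prod_{i=1}^{n-1}(1+\zeta^{si}t)=\frac{(1-(-t)^d)^{n/d}}{1+t}.
\]
The sum $\sum\zeta^{-s}$ over those $s$ of order $d$ is the Ramanujan sum $c_d(1)=\mu(d)$. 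Hence
\[
\card{S_1(n,k-1)}=\frac1n\sum_{d\mid n}\mu(d)\,[t^{k-1}]\frac{(1-(-t)^d)^{n/d}}{1+t}.
\]
Expanding, the coefficient is $\sum_{m}(-1)^{k-1-dm+(d+1)m}\binom{n/d}{m}$, taken over $0\le dm\le k-1$. This is an alternating partial sum of binomial coefficients. (For $d$ even, it is simply the partial sum of $\binom{n/d}{m}$ with sign $(-1)^{k-1}$; for $d$ odd there is a uniform sign.)

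\emph{Permutation side.} I need $\card{\CVP(n,k)}$ as a comparable formula. A V-shaped permutation with minimum in position $k$ is determined by the set $\{\sigma_1,\dots,\sigma_{k-1}\}\subseteq\{2,\dots,n\}$. Counting cyclic ones requires a known result. The natural tool is the Gessel--Reutenauer / Thibon / Archer--Elizalde enumeration of unimodal permutations by cycle type, which (after reverse-complement, which preserves cycle type) expresses the number of cyclic unimodal permutations via primitive necklaces: the bijection of \cite{ArcEli,Ste} sends cyclic V-shaped permutations to Lyndon-type words. I would use the version of that formula refined by the position of the extremum. I recall it as
\[
\card{\CVP(n,k)}=\frac1n\sum_{d\mid n}\mu(d)\,(\pm1)\sum_{m}(\pm1)^m\binom{n/d}{m},
\]
with the partial sum cut off at $dm\le k-1$. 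If I cannot cite this directly, I would rederive it. I would take the quasisymmetric generating function of V-shaped permutations with given descent set and pair it with the Gessel--Reutenauer character for one $n$-cycle, namely $\frac1n\sum_{d\mid n}\mu(d)p_d^{n/d}$. Evaluating $p_d^{n/d}$ on the hook-shaped fundamental quasisymmetric functions $F_{\{1,\dots,k-1\}}$ produces exactly alternating binomial partial sums. This is because the coefficient of $F$ for a hook descent set in $p_d^{n/d}$ is a hook character value $\chi^{(n-k+1,1^{k-1})}$ evaluated on cycle type $(d^{n/d})$, which by the Murnaghan--Nakayama rule is a signed sum of binomials.

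\emph{Matching.} Finally, for each $d\mid n$, I compare the $d$-th terms of the two divisor sums. This amounts to the identity
\[
[t^{k-1}]\frac{(1-(-t)^d)^{n/d}}{1+t} = \chi^{(n-k+1,1^{k-1})}\bigl(d^{n/d}\bigr),
\]
possibly up to a reindexing. This identity is standard, since $\sum_k\chi^{(n-k+1,1^{k-1})}(\mu)(-t)^{k-1}\cdot(1+t)=\prod_i(1-(-t)^{\mu_i})$. The main obstacle is the permutation side: precisely identifying the count of cyclic V-shaped permutations with minimum at position $k$ as a hook character sum. Descent set $\{1,\dots,k-1\}$ is the correct descent set for this permutation class. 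However, I must check that the class is exactly the class whose descent set is a hook, and I must track the sign conventions carefully. Once that is settled, the theorem follows termwise. Summing over $k$ then gives $\card{S_1(n)}=\card{\CVP(n)}$.
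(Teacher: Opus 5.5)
Your proposal is correct and is essentially the paper's argument. The subset side is exactly the paper's computation in Lemma~\ref{lem:S1}, and the permutation-side formula you need is the generating polynomial $\frac{1}{n(1+x)}\sum_{d\mid n}\mu(d)\bigl(1-(-x)^d\bigr)^{n/d}$, which the paper simply cites from Thibon (Lemma~\ref{lem:thibon}) and which your Gessel--Reutenauer derivation (descent set exactly $\{1,\dots,k-1\}$, hook Schur function $s_{(n-k+1,1^{k-1})}$ paired with $\frac1n\sum_{d\mid n}\mu(d)p_d^{n/d}$) correctly reproduces; fix two harmless sign slips, namely that the hook-character identity is $(1+t)\sum_{k}\chi^{(n-k+1,1^{k-1})}(\lambda)\,t^{k-1}=\prod_i\bigl(1-(-t)^{\lambda_i}\bigr)$ (with $t^{k-1}$, not $(-t)^{k-1}$), and that the coefficient of $t^{k-1}$ in $\bigl(1-(-t)^d\bigr)^{n/d}/(1+t)$ is $\sum_{0\le m\le (k-1)/d}(-1)^{k-1+m}\binom{n/d}{m}$ for every $d$, so your parenthetical about uniform signs is wrong, although your displayed matching identity is right.
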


For the right-hand side, we use the following generating polynomial for V-shaped permutations with respect to the position of their minimum, which can be deduced from Thibon's work~\cite{Thibon}.

\begin{lemma}[\cite{Thibon}] \label{lem:thibon}
We have
\[
    \sum_{k=1}^n\card{\CVP(n,k)}x^{k-1}=\frac{1}{n(1+x)}\sum_{d\mid n}\mu(d)\left(1-(-x)^d\right)^{n/d}.
\]
\end{lemma}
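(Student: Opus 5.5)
We will prove the lemma through the Gessel--Reutenauer theorem, which expresses cycle-type/descent-set statistics as scalar products of symmetric functions. A V-shaped permutation with its minimum in position $k$ is exactly a permutation with descent set $\{1,2,\dots,k-1\}$. The plan is therefore: (1) write $\card{\CVP(n,k)}$ as a scalar product; (2) recognise the relevant ribbon as a hook; (3) sum the hook Schur functions against $x^{k-1}$ using a generating-function identity; (4) evaluate the scalar products.

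\textbf{Step 1 (Gessel--Reutenauer).} The number of permutations of $[n]$ with cycle type $\lambda$ and descent set $D$ equals $\langle r_D, L_\lambda\rangle$. Here $r_D$ is the ribbon Schur function of $D$, and $L_\lambda$ is the Lie character. For a single $n$-cycle,
\[
L_{(n)}=\frac1n\sum_{d\mid n}\mu(d)\,p_d^{n/d}.
\]
The ribbon with descent set $\{1,\dots,k-1\}$ is a hook, so $r_D=s_{(n-k+1,1^{k-1})}$. The case $k=1$ gives the identity permutation's descent set $\emptyset$ and ribbon $h_n=s_{(n)}$, which fixes the orientation. Hence
\[
\card{\CVP(n,k)}=\bigl\langle s_{(n-k+1,1^{k-1})},L_{(n)}\bigr\rangle.
\]

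\textbf{Step 2 (summing the hooks).} By the Pieri rule, $h_{n-j}e_j=s_{(n-j,1^j)}+s_{(n-j+1,1^{j-1})}$, with the convention that out-of-range terms vanish. Summing with weights $x^j$ gives
\[
\sum_{j=0}^n x^j h_{n-j}e_j=(1+x)\sum_{k=1}^n x^{k-1}s_{(n-k+1,1^{k-1})}.
\]
The left-hand side is the degree-$n$ part of $H(1)E(x)=\prod_i (1+x x_i)/(1-x_i)$. Taking logarithms,
\[
H(1)E(x)=\exp\Bigl(\sum_{m\ge1}\frac{p_m}{m}\bigl(1-(-x)^m\bigr)\Bigr).
\]

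\textbf{Step 3 (pairing with power sums).} Expanding the exponential, the coefficient of $p_d^{n/d}$ in degree $n$ is
\[
\frac{\bigl(1-(-x)^d\bigr)^{n/d}}{d^{n/d}(n/d)!}=\frac{\bigl(1-(-x)^d\bigr)^{n/d}}{z_{(d^{n/d})}}.
\]
Since $\langle p_\lambda,p_\mu\rangle=z_\lambda\delta_{\lambda\mu}$, it follows that
\[
\Bigl\langle \sum_j x^j h_{n-j}e_j,\;p_d^{n/d}\Bigr\rangle=\bigl(1-(-x)^d\bigr)^{n/d}.
\]
Combining this with the expression for $L_{(n)}$ from Step 1 and dividing by $1+x$ gives exactly the claimed formula. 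The quotient is a genuine polynomial because the left-hand side of Step 2 is a polynomial multiple of $1+x$.

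The main obstacle is Step 1: quoting the Gessel--Reutenauer theorem with the correct conventions, namely descent set versus composition, and whether the hook is $s_{(n-k+1,1^{k-1})}$ or its transpose. We will settle this by checking small cases. For $n=3$, the cyclic V-shaped permutations are $231$ and $312$, with minimum positions $1$ and $2$ respectively. The formula gives $\bigl((1+x)^3-(1+x^3)\bigr)/\bigl(3(1+x)\bigr)=x$, which predicts a single permutation with $k=2$ and none with $k=1$. This does not match, so the orientation must be pinned down carefully. It will likely require replacing V-shaped by unimodal via reverse-complement, which preserves cycle type, or applying $\omega$, which corresponds to $x\mapsto$ a suitable reflection. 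We will use these small cases to fix the correct hook before writing the argument.
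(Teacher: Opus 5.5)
Your Steps 1--3 already form a complete and correct proof. The identification $r_{\{1,\dots,k-1\}}=s_{(n-k+1,1^{k-1})}$, the Pieri telescoping, and the evaluation $\langle \sum_j x^jh_{n-j}e_j,\,p_d^{n/d}\rangle=(1-(-x)^d)^{n/d}$ are all right, and combined with $L_n=\frac1n\sum_{d\mid n}\mu(d)p_d^{n/d}$ they give exactly the stated formula. The paper offers no proof of its own beyond citing Thibon, and your argument is essentially the symmetric-function computation (Gessel--Reutenauer plus a hook generating function) behind that reference.

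The one thing to fix is your final paragraph. Its sanity check is wrong, so the \emph{obstacle} you describe does not exist.
\begin{itemize}
\item The permutation $231$ is not V-shaped: it is $2<3>1$, and its minimum is in position $3$, not $1$. A count with a permutation at $k=1$ was impossible anyway, since the only V-shaped permutation with minimum in position $1$ is the identity.
\item The only cyclic V-shaped permutation of $[3]$ is $312=(1\,3\,2)$, with minimum in position $2$. This is exactly what the coefficient $x$ predicts.
\end{itemize}

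Your $k=1$ check ($\emptyset\mapsto h_n=s_{(n)}$) already pins down the orientation, and you should not change it. Replacing the hook by its conjugate $s_{(k,1^{n-k})}$ would compute $\card{\CVP(n,n+1-k)}$ instead, and the polynomial is not palindromic in general. For $n=6$ the formula gives $x+2x^2+x^3+x^4$, and indeed $\CVP(6,3)=\{651234,631245\}$ while $\CVP(6,4)=\{642135\}$. Passing to unimodal permutations by reverse-complement is harmless, since it leads to the same hook $s_{(n-k+1,1^{k-1})}$, but it is unnecessary. Simply replace that paragraph with a correct small-case check.
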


Next we find a generating polynomial for the left-hand side of
Theorem~\ref{thm:S=CVP}.

\begin{lemma}
    \label{lem:S1}
    We have
$$
 \sum_{k=1}^n\card{S_1(n,k-1)}x^{k-1}=\frac{1}{n(1+x)} \sum_{d \mid n} \mu(d) \left(1 - (-x)^d\right)^{n/d}.
  $$  
\end{lemma}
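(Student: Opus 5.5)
Our plan is to compute the bivariate generating function of subsets of $[n-1]$ by size and by sum modulo $n$ using roots of unity. Let $\nroot=e^{2\pi i/n}$ and consider
\[
P(x,y)=\prod_{j=1}^{n-1}(1+xy^j).
\]
Filtering the exponents of $y$ congruent to $1 \bmod n$ gives
\[
\sum_{k}\card{S_1(n,k)}x^k=\frac1n\sum_{m=0}^{n-1}\nroot^{-m}P(x,\nroot^m).
\]
Note that the left side of Lemma~\ref{lem:S1} is exactly this polynomial, after the reindexing $k-1\mapsto k$.

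\textbf{Evaluating the root-of-unity products.} To avoid dividing by $1+x\nroot^{mn}$ prematurely, we first evaluate the full product over $j\in\{0,\dots,n-1\}$ and then divide by the $j=0$ factor $(1+x)$. If $\nroot^m$ has order $d$, then as $j$ runs over $0,\dots,n-1$ the values $\nroot^{mj}$ run over all $d$th roots of unity, each $n/d$ times. Hence
\[
\prod_{j=0}^{n-1}(1+x\nroot^{mj})=\left(\prod_{\omega^d=1}(1+x\omega)\right)^{n/d}=\left(1-(-x)^d\right)^{n/d},
\]
using $\prod_{\omega^d=1}(z-\omega)=z^d-1$ with $z=-1/x$. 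Therefore
\[
P(x,\nroot^m)=\frac{\left(1-(-x)^d\right)^{n/d}}{1+x}.
\]

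\textbf{Grouping by order.} Grouping the values of $m$ by $d=n/\gcd(m,n)$, the weights $\nroot^{-m}$ over those $m$ with $\nroot^m$ of exact order $d$ sum to the Ramanujan sum over primitive $d$th roots of unity. That sum is $\mu(d)$. This yields
\[
\sum_k\card{S_1(n,k)}x^k=\frac{1}{n(1+x)}\sum_{d\mid n}\mu(d)\left(1-(-x)^d\right)^{n/d},
\]
which is the claimed identity.

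\textbf{Main obstacle.} The step needing care is that the sum of primitive $d$th roots of unity equals $\mu(d)$. This is standard: Möbius inversion applied to the fact that the sum of all $d$th roots of unity is $[d=1]$. A minor point is that the right side must be a polynomial, i.e.\ each $\left(1-(-x)^d\right)^{n/d}$ is divisible by $1+x$. This follows because $x=-1$ gives $1-1=0$, or directly from the derivation, since $P(x,\nroot^m)$ is itself a polynomial. Also, since $k$ ranges up to $n$ and $S_1(n,n-1)$ involves subsets of $[n-1]$, the top coefficients (sizes $n-1$ and $n$) are handled automatically by the generating function, so no boundary cases need separate treatment.
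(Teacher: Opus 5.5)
Your proposal is correct and follows essentially the same route as the paper: a roots-of-unity filter applied to $\prod_{j=1}^{n-1}(1+xt^j)$, the evaluation $\prod_{\omega^d=1}(1+x\omega)=1-(-x)^d$ with the extra factor $1+x$ divided out, and grouping by the order of the root so that the sum of the primitive roots gives $\mu(d)$. The only cosmetic difference is that you complete the product to $j=0,\dots,n-1$ and then divide by $1+x$, whereas the paper directly counts that the factor $1+x$ occurs $n/m-1$ times.
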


\begin{proof}
Consider the generating function
\[    
    f(x,t) = \prod_{j = 1}^{n - 1} (1 + xt^j),
\]
which enumerates the subsets of $[n - 1]$ with respect to their size and their sum. To extract the coefficients that correspond to sums that are $1 \bmod n$, let $\nroot$ be a primitive $n$th root of unity. We have
\begin{equation}
    \label{eq:fourier}
   \sum_{k=1}^n\card{S_1(n,k-1)}x^{k-1}=
    \frac{1}{n} \sum_{\ell = 0}^{n - 1} \nroot^{-\ell} f(x,\nroot^\ell).
\end{equation}

    Fix $\ell$ and let $\omega\coloneqq\nroot^\ell$, which is a root
    of unity of order $m\coloneqq n / {\gcd(n, \ell)}$. Write 
    \begin{equation*}
        f(x,\nroot^\ell) = f(x,\omega)= \prod_{j = 1}^{n - 1} (1 + x \omega^j),
    \end{equation*}
    and note that the factors only depend on the congruence class of $j$ modulo $m$. For each $1\le s\le m-1$, the factor $1 + x \omega^s$ appears $n/m$ times, whereas the factor $1 + x \omega^m=1 + x$ appears $n/m-1$ times. Thus, we have
    \begin{equation*}
        f(x,\nroot^\ell)
        =
        \frac{1}{1 + x} \left(\prod_{s = 0}^{m-1} (1 + x \omega^s)\right)^{n/m}.
    \end{equation*}
    
    Dividing the expression $\prod_{s=0}^{m-1}(z-\omega^s)=z^m-1$ by $z^m$ and substituting $z=(-x)^{-1}$, we get 
    \begin{equation*}
        \prod_{s = 0}^{m-1} (1 + x \omega^s)
        =
        1 - (-x)^m.
    \end{equation*}
    Therefore,
    \begin{equation*}
        \nroot^{-\ell} f(x,\nroot^\ell)
        =
        \frac{\nroot^{-\ell}}{1+x} \left(1 - (-x)^{n / {\gcd(n,\ell)}}\right)^{\gcd(n,\ell)}.
    \end{equation*}
    
    Plugging this back into  equation \eqref{eq:fourier} and grouping the terms
    of the sum where $\gcd(n,\ell)=d$, for each divisor $d$ of $n$, we obtain
    \begin{align*}
        \sum_{k=1}^n\card{S_1(n,k-1)}x^{k-1} 
        & =\frac{1}{n} \sum_{\ell=0}^{n-1} \frac{\nroot^{-\ell}}{1 + x} \left(1 - (-x)^{n / \gcd(n,\ell)}\right)^{\gcd(n,\ell)}\\
        &=         \frac{1}{n(1 + x)} \sum_{d \mid n} \left(1 - (-x)^{n / d}\right)^d \left(\sum_{\substack{\ell=0\\ \gcd(n, \ell) = d}}^{n-1} \nroot^{-\ell} \right).
    \end{align*}
    It is well known that the sum of all the roots of unity of order $n / d$ is equal to $\mu(n/d)$, and so
    \begin{align*}
       \sum_{k=1}^n\card{S_1(n,k-1)}x^{k-1}
        = \frac{1}{n(1 + x)} \sum_{d \mid n} \left(1 - (-x)^{n / d}\right)^d \mu(n/d),
    \end{align*}
    which is equivalent to the stated formula.
\end{proof}

\begin{proof}[Proof of Theorem~\ref{thm:S=CVP}]
The expressions for the generating functions in Lemmas~\ref{lem:thibon} and~\ref{lem:S1} are equal, so the coefficients must satisfy $\card{S_1(n,k-1)}=\card{\CVP(n,k)}$.
\end{proof}

\subsection{Bijections between permutations and necklaces}

Finding a bijective proof of Theorem~\ref{thm:S=CVP} remains an open question. In this subsection we describe a bijection between V-shaped permutations and primitive necklaces. This will allow us to use Theorem~\ref{thm:S=CVP} to deduce a statement relating subset sums and primitive necklaces, which can be viewed as an analogue of the equality $\card{\bS_0(n,k)}=\card{N(n,k)}$ that we discussed in Section~\ref{sec:r=0}.

Permutations with a given cycle structure and descent set were studied in a seminal paper by Gessel and Reutenauer~\cite{GR}. They showed that permutations whose descent set is contained in a given set (equivalently, permutations forced to have ascents in given positions) are in bijection with multisets of primitive necklaces, and this bijection preserves the cycle structure.

Their construction was later generalized by Steinhardt~\cite{Ste} and by Gessel, Restivo and Reutenauer~\cite{GRR} to permutations that are forced to have ascents and descents in certain positions, although the resulting necklaces are not always primitive. A similar construction was used by Archer and Elizalde~\cite{ArcEli} in their study of cyclic permutations realized by the tent map, in order to give a bijection between primitive binary necklaces with an odd number of ones and cyclic permutations that are unimodal (or equivalently, by applying the reverse-complement operation, cyclic permutations that are V-shaped).

Let us rephrase this bijection in our setting. First, define the set
\begin{equation}\label{eq:L+} 
L^+(n,k)=\begin{cases} L(n,k)\cup L(n/2,k/2) & \text{if $n$ is even and $k\equiv 2\bmod 4$},\\ L(n,k) & \text{otherwise.}\end{cases}
\end{equation}
We will use the convention $\CVP(n,0)=\CVP(n,n+1)=\emptyset$.

\begin{theorem}[\cite{Ste,ArcEli}]\label{thm:ArcEli}
    For any $0\le k\le n$, there is a bijection 
    \begin{equation}\label{eq:AEbij} \Psi:\CVP(n,k)\cup\CVP(n,k+1)\to L^+(n,k)\end{equation} 
\end{theorem}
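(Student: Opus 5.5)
We would prove Theorem~\ref{thm:ArcEli} by rebuilding the Archer--Elizalde bijection~\cite{ArcEli} (itself a version of the Gessel--Reutenauer/Steinhardt construction~\cite{GR,Ste}) for V-shaped rather than unimodal permutations.

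\textbf{The forward map.} Let $\sigma\in\CVP(n,m)$, viewed as a function on $[n]$. Then $\sigma$ is decreasing on $\{1,\dots,m\}$ and increasing on $\{m,\dots,n\}$. Label each value $i\neq m$ with the letter $0$ if $i<m$ and $1$ if $i>m$. The position $m$ of the minimum is an ambiguous point and may receive either letter. Reading the letters along the single cycle $i,\sigma(i),\sigma^2(i),\dots$ gives a cyclic word of length $n$, that is, a necklace.
\begin{itemize}
\item If $m$ gets the letter $1$, the necklace has $n-m+1$ ones.
\item If $m$ gets the letter $0$, it has $n-m$ ones.
\end{itemize}
After fixing the convention (or replacing $k$ by $n-k$ via complementation if needed), $\sigma\in\CVP(n,k)$ gets one choice of letter and $\sigma\in\CVP(n,k+1)$ gets the other, and both produce a necklace with $k$ ones. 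This explains the union on the left side of~\eqref{eq:AEbij}. So we define $\Psi(\sigma)$ as this necklace, with the letter at $m$ fixed by which of the two sets $\sigma$ is taken from.

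\textbf{The inverse.} Equip infinite binary words with the \emph{alternating} (signed) lexicographic order. To compare two words at their first difference, use the usual order if the common prefix contains an even number of $0$'s, and the reversed order otherwise, since the branch labelled $0$ is where $\sigma$ is decreasing. The key observation is that, with this order, $i<j$ holds exactly when the itinerary of $i$ precedes the itinerary of $j$; one checks this by induction on the first position where the itineraries differ.

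Conversely, given a primitive necklace $(w)$ with $k$ ones, take its $n$ rotations. They are distinct, so their infinite periodic extensions are distinct and can be totally ordered by the alternating order. Label them $1,\dots,n$ in that order, and let $\sigma$ send each rotation to its shift by one letter. Then $\sigma$ is a single $n$-cycle, because the shift acts transitively on the rotations. It is V-shaped because the shift reverses the order on words starting with $0$ and preserves it on words starting with $1$. The minimum sits at the boundary between the words starting with $0$ and those starting with $1$, and which side of that boundary the minimum lands on tells us whether $\sigma$ lies in $\CVP(n,k)$ or $\CVP(n,k+1)$. The two constructions are mutually inverse because the order of the labels is recovered from the itineraries.

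\textbf{The main obstacle: the extra term $L(n/2,k/2)$.} In the forward direction the word read from $\sigma$ need not be primitive. Its itineraries can only coincide through the ambiguous point $m$: the value $m$ has two itineraries, and two of the $n$ labels may become equal. We expect to show the following.
\begin{itemize}
\item When the letter at $m$ is chosen so that the word is not primitive, the word must be $u^2$.
\item Moreover $u$ is primitive and contains an odd number of ones. This parity condition comes from the alternating order: the two itineraries agree forever while they lie on opposite sides of a reversing branch. Hence $k=2\card{u}_1\equiv 2\bmod 4$ and $n$ is even.
\item In this case we send $\sigma$ to the Lyndon class of $u$ in $L(n/2,k/2)$.
\item Conversely, each such $u$ arises from exactly one permutation, obtained by ordering the $n/2$ rotations of $u$ and doubling each at the point where the order ``flips.''
\end{itemize}
We would first verify this case analysis on small $n$ (say $n=4,6$) against Lemma~\ref{lem:NL} before writing the general argument.
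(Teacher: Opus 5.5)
Your architecture is the one the paper uses; the paper itself defers the details to \cite{Ste,ArcEli}. The forward map reads branch letters along the cycle. The inverse sorts rotations in the alternating lexicographic order, which the paper implements as ordinary lexicographic order on the partial sums $s(w^{(j)})$. Your treatment of primitive necklaces is fine. The gap is the degenerate case, which is the only place where $L^+(n,k)$ differs from $L(n,k)$: you leave it at ``we expect to show'', and the mechanism you sketch is wrong in two ways.

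\emph{Ties.} They are not confined to ``two of the $n$ labels''. If $w=u^2$, every position $i$ has the same itinerary as $\sigma^{n/2}(i)$, so all $n$ positions split into $n/2$ tied pairs.

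\emph{Parity convention.} With your labelling (decreasing branch $=0$, flips counted on $0$'s), $\CVP(n,k)\cup\CVP(n,k+1)$ yields necklaces with $n-k$ ones, and the relevant count is $|u|_0$, not $|u|_1$. For example, $\sigma=651234$ gives $(011)^2$, where $u$ has two ones. Keeping ``odd number of ones'' in your convention would predict $\card{\CVP(6,2)}+\card{\CVP(6,3)}=\card{L(6,4)}=2$, whereas the true value is $3$. You must complement the letters themselves (decreasing branch $=1$, flips counted on $1$'s, as the paper's map $s$ does). After that, your parity claim is correct.

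The missing argument is short; work in the paper's convention. Suppose $w=u^j$ with $u$ primitive of length $p$ and $j\ge 2$. The positions sharing the itinerary of $i$ are exactly $X=\{i,\sigma^p(i),\dots,\sigma^{(j-1)p}(i)\}$, and $\sigma^p$ acts on $X$ as a $j$-cycle. Each $\sigma^t(X)$ lies inside one branch, so $\sigma^p|_X$ is a composition of $p$ monotone bijections. It is order-preserving iff $|u|_1$ is even. An order-preserving bijection of a finite chain is the identity, which is not a $j$-cycle. Hence $\sigma^p|_X$ is the order reversal, an involution, which forces $j=2$ and $|u|_1$ odd.

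For the inverse, your key observation gives weak monotonicity: $x<y$ implies the itinerary of $x$ is $\preceq$ that of $y$. So each tied pair is two consecutive values $\{2t-1,2t\}$, and the pairs are ordered by the alternating order on the $n/2$ distinct rotations of $u$. $\sigma$ must map each pair onto the pair of the shifted rotation, increasingly or decreasingly according to its first letter. Hence $\sigma$ is uniquely determined, and this is exactly the paper's tie-breaking rule. The induced map on pairs is an $(n/2)$-cycle, and oddness of $|u|_1$ makes $\sigma^{n/2}$ swap each pair. So $\sigma$ is a single $n$-cycle, decreasing on $[1,k]$ and increasing on $[k+1,n]$.

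This is what must replace your ``doubling each at the point where the order flips''. Only with it is $\Psi$ a bijection onto the $L(n/2,k/2)$ part. The paper gets this from \cite[Thm.~2.2]{Ste}.
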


We can describe $\Psi$ as follows.  Let $\sigma\in \CVP(n,k)\cup\CVP(n,k+1)$, that is, $\sigma$ is a cyclic permutation that satisfies $\sigma_1>\sigma_2>\dots>\sigma_k$ and $\sigma_{k+1}<\sigma_{k+2}<\dots<\sigma_n$.
To compute $\Psi(\sigma)$, write $\sigma$ in cycle notation, say $\sigma=(a_1,a_2,\dots,a_n)$, and replace each $a_i$ with a $1$ if $a_i\le k$, and with a $0$ if $a_i>k$. This yields a binary necklace $(w)$ with $k$ ones.

It follows from \cite[Thm.~2.2]{Ste} that $(w)$ is either primitive or of the form $(vv)$, where $v$ is primitive with an odd number, $k/2$, of ones. 
In the first case, we let $\Psi(\sigma)=(w)$; in the second
case, which can only happen when $n$ is even and $k\equiv2\bmod4$, we let $\Psi(\sigma)=(v)$. In both cases, $\Psi(\sigma)\in L^+(n,k)$.

To describe $\Psi^{-1}$, define first a map $s$ that takes a binary word $w$ to the binary word $s=s(w)$ of the same length, whose entries are the partial sums of $w$, that is, $s_j=w_1+\dots+w_j\bmod 2$ for all $j\in[n]$.

Given $(w)\in L(n,k)$, we can compute $\Psi^{-1}(w)$ as follows. Write $w=w_1\dots w_n$, and consider the cyclic shifts $w^{(j)}=w_jw_{j+1}\dots w_nw_1w_2\dots w_{j-1}$ for $1\le j\le n$. Order the words $s(w^{(j)})$ lexicographically from largest to smallest, and let $a_j$ be the position of the word $s(w^{(j)})$ in this ordering. Finally, let $\Psi^{-1}(w)=\sigma=(a_1,a_2,\dots,a_n)$. It follows from \cite{Ste,ArcEli} that $\sigma_1>\sigma_2>\dots>\sigma_{k}$
and $\sigma_{k+1}<\sigma_{k+2}<\dots<\sigma_n$, that is, $\sigma\in \CVP(n,k)\cup\CVP(n,k+1)$. 

Now suppose that $n$ is even and $k\equiv 2\bmod 4$, and let $(v)\in L(n/2,k/2)$. To compute $\Psi^{-1}(v)$, let $w=vv$ and apply the construction from the previous paragraph, with the following rule to break the ties caused by $s(w^{(j)})=s(w^{(j+n/2)})$: the order relationship between $a_{j+1}$ and $a_{j+1+n/2}$ is the same as the order relationship between $a_{j}$ and $a_{j+n/2}$ if $w_j=0$, otherwise it is reversed.

\begin{example}
    Let $n=5$ and $k=3$, and let $\sigma=54213=(1,5,3,2,4)\in\CVP(5,4)$. Replacing each element with a $1$ or a $0$ depending on whether it is at most $3$ or not, we get the necklace $(1,0,1,1,0)\in L(5,3)$.

    Conversely, given $w=10110$, its cyclic shifts $w^{(j)}$ for $1\le j\le 5$ are 
    $$10110,01101,11010,10101,01011,$$
    and their partial sums are 
    $$11011,01001,10010,11001,01101.$$
    Ordering them lexicographically from largest to smallest recovers the permutation $\sigma=(1,5,3,2,4)=54213\in\CVP(5,4)$.
\end{example}

\begin{example}
    Let $n=6$ and $k=2$, and let $\sigma=651234=(1,6,4,2,5,3)\in\CVP(6,3)$. Replacing each element with a $1$ or a $0$ depending on whether it is at most $2$ or not, we get the necklace $(1,0,0,1,0,0)$, which is of the form $(vv)$ for $(v)=(1,0,0)\in L(3,1)$.

    Conversely, from $v=100$, take $w=vv=100100$ and consider its cyclic shifts
    $$100100,001001,010010,100100,001001,010010,$$
    and their partial sums 
    $$111000,001110,011100,111000,001110,011100.$$
    Ordering them lexicographically from largest to smallest, breaking ties with the above convention, recovers the permutation $\sigma=(1,6,4,2,5,3)=651234\in\CVP(6,3)$.
\end{example}

Theorem~\ref{thm:ArcEli} and Lemma~\ref{lem:NL} imply the following enumeration formula for cyclic V-shaped permutations whose minimum is in one of two positions.

\begin{corollary}\label{cor:CVP-formula}
    For any $0\le k\le n$, 
\begin{multline*}
    \card{\CVP(n,k)}+\card{\CVP(n,k+1)}=\card{L^+(n,k)}\\
    =\begin{cases}
    \displaystyle
    \frac{1}{n}\sum_{d\mid\gcd(n,k)}\mu(d)\binom{n/d}{k/d}+\frac{2}{n}\sum_{d\mid\gcd(\frac{n}{2},\frac{k}{2})}\mu(d)\binom{\frac{n}{2d}}{\frac{k}{2d}} & \text{if $n$ is even and $k\equiv 2\bmod 4$},\\ 
    \displaystyle
    \frac{1}{n}\sum_{d\mid\gcd(n,k)}\mu(d)\binom{n/d}{k/d} & \text{otherwise.}\end{cases}
\end{multline*}
\end{corollary}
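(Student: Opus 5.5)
The first equality in Corollary~\ref{cor:CVP-formula} follows directly from Theorem~\ref{thm:ArcEli}. Its domain $\CVP(n,k)\cup\CVP(n,k+1)$ is a disjoint union: a permutation with its unique minimum in position $k$ cannot also have it in position $k+1$. This holds including the boundary conventions $\CVP(n,0)=\CVP(n,n+1)=\emptyset$. So the bijection $\Psi$ gives $\card{\CVP(n,k)}+\card{\CVP(n,k+1)}=\card{L^+(n,k)}$.

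For the second equality, I would work from the definition~\eqref{eq:L+} of $L^+(n,k)$.

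\emph{Case $n$ not even or $k\not\equiv 2\bmod 4$.} Here $L^+(n,k)=L(n,k)$, and the formula for $\card{L(n,k)}$ in Lemma~\ref{lem:NL} gives the stated expression immediately.

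\emph{Case $n$ even and $k\equiv 2\bmod 4$.} Here $L^+(n,k)=L(n,k)\cup L(n/2,k/2)$. This union must be understood as disjoint. The two sets consist of necklaces of different lengths ($n$ versus $n/2$), so they cannot overlap, and this is exactly how the bijection $\Psi$ uses them. Hence
\[
\card{L^+(n,k)}=\card{L(n,k)}+\card{L(n/2,k/2)}.
\]
Applying Lemma~\ref{lem:NL} to each term, the first gives $\frac1n\sum_{d\mid\gcd(n,k)}\mu(d)\binom{n/d}{k/d}$. The second gives
\[
\frac{1}{n/2}\sum_{d\mid\gcd(n/2,k/2)}\mu(d)\binom{\frac{n}{2d}}{\frac{k}{2d}},
\]
and the prefactor $\frac{1}{n/2}$ equals $\frac{2}{n}$, which matches the stated formula.

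There is no real obstacle here; the only point requiring care is the disjointness of the unions on both sides.
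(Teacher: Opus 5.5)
Your proposal is correct and follows the paper's own route: the paper gets this corollary by combining the bijection $\Psi$ of Theorem~\ref{thm:ArcEli} with the formula for $\card{L(n,k)}$ from Lemma~\ref{lem:NL}, applying that formula also to $L(n/2,k/2)$ when $n$ is even and $k\equiv 2\bmod 4$, where the prefactor $\frac{1}{n/2}=\frac{2}{n}$ produces the second sum. The disjointness of $\CVP(n,k)$ and $\CVP(n,k+1)$, and of $L(n,k)$ and $L(n/2,k/2)$, which you point out, is left implicit in the paper.
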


Using inclusion-exclusion and some algebraic manipulations, it follows from Corollary~\ref{cor:CVP-formula} that
$$\card{\CVP(n,k)}=\sum_{i=0}^{k-1}(-1)^{k-1-i}\card{L^+(n,i)}=\frac{1}{n}\sum_{d\mid n}\mu(d)\sum_{j=0}^{\lfloor\frac{k-1}{d}\rfloor}(-1)^{k+j-1}\binom{n/d}{j}.
$$
This is the same expression for $\card{\CVP(n,k)}$ that is obtained when extracting the coefficient of $x^{k-1}$ in Lemma~\ref{lem:thibon}.

We can now combine Theorems~\ref{thm:S=CVP} and~\ref{thm:ArcEli} to obtain a relationship between subsets that sum to $1 \bmod n$ and primitive necklaces. In contrast with the well-known relationship described in Section~\ref{sec:r=0} between subsets that sum to $0 \bmod n$ and arbitrary necklaces, this appears to be a new identity, even without the refinement by $k$.

\begin{corollary}\label{cor:S'1=L}
    For any $n\ge2$ and $0\le k\le n$, we have
    $$\card{\bS_1(n, k)} = \card{L^+(n, k)}.$$
    In particular, $\card{\bS_1(n, k)} = \card{L(n, k)}$ when $n$ or $k$ are odd, and $\card{\bS_1(n)} = \card{L(n)}$ when $n$ is odd.
\end{corollary}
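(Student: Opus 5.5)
The plan is to chain the three identities already available: equation~\eqref{eq:SS'}, Theorem~\ref{thm:S=CVP}, and Theorem~\ref{thm:ArcEli}.

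First, by equation~\eqref{eq:SS'} with $r=1$,
\[
\card{\bS_1(n,k)}=\card{S_1(n,k)}+\card{S_1(n,k-1)}.
\]
This holds for $1\le k\le n$. For $k=0$, $\bS_1(n,0)=\emptyset$ because $n\ge2$ makes the empty sum $0\not\equiv1$; this matches $S_1(n,0)=\emptyset$ together with the convention $S_1(n,-1)=\emptyset$.

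Next, Theorem~\ref{thm:S=CVP} gives $\card{S_1(n,k-1)}=\card{\CVP(n,k)}$ for $1\le k\le n$. Applying it with $k+1$ in place of $k$ gives $\card{S_1(n,k)}=\card{\CVP(n,k+1)}$ for $0\le k\le n-1$.

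The boundary cases need checking against the conventions $\CVP(n,0)=\CVP(n,n+1)=\emptyset$:
\begin{itemize}
\item For $k=n$: $S_1(n,n)=\emptyset$, since $[n-1]$ has no $n$-element subsets.
\item For $k=0$: $S_1(n,-1)$ is empty by convention, and $S_1(n,0)=\emptyset$ because the empty sum is $0\not\equiv 1\pmod n$. On the permutation side, $\CVP(n,1)=\emptyset$, since the only increasing permutation is the identity, which is not a cycle for $n\ge2$. So both sides are $0$.
\end{itemize}
Combining,
\[
\card{\bS_1(n,k)}=\card{\CVP(n,k)}+\card{\CVP(n,k+1)}=\card{L^+(n,k)}
\]
for all $0\le k\le n$, where the last equality is Theorem~\ref{thm:ArcEli} (equivalently Corollary~\ref{cor:CVP-formula}).

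For the particular cases, when $n$ or $k$ is odd the first case of definition~\eqref{eq:L+} cannot occur, because it requires $n$ even and $k\equiv2\bmod 4$. Hence $L^+(n,k)=L(n,k)$. When $n$ is odd, summing over $k$ gives $\card{\bS_1(n)}=\card{L(n)}$, because the sets $L(n,k)$ for different $k$ are disjoint and their union is $L(n)$.

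The only delicate point is the bookkeeping at $k=0$ and $k=n$, where the index shifts leave the ranges of the quoted results. This is handled entirely by the emptiness checks above, using $n\ge2$. The substantive content is already contained in Theorem~\ref{thm:S=CVP}.
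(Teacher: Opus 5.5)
Your proof is correct and matches the paper's: you chain equation~\eqref{eq:SS'}, Theorem~\ref{thm:S=CVP}, and Theorem~\ref{thm:ArcEli}, then use $L^+(n,k)=L(n,k)$ when $n$ or $k$ is odd and sum over $k$ for odd $n$. Your bookkeeping at the endpoints, using $S_1(n,n)=\emptyset$ and the conventions $\CVP(n,0)=\CVP(n,n+1)=\emptyset$, is more explicit than the paper, which calls $k=0$ trivial and applies Theorem~\ref{thm:S=CVP} at $k=n$ without remarking that this index falls outside its stated range.
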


\begin{proof}
The statement is trivial for $k=0$. For $1\le k\le n$, we have
\begin{align*}
    \card{\bS_1(n, k)}&=\card{S_1(n,k-1)}+\card{S_1(n,k)} && \text{(by Equation~\eqref{eq:SS'})}\\
    &=\card{\CVP(n,k)}+\card{\CVP(n,k+1)}  && \text{(by Theorem~\ref{thm:S=CVP})}\\
    &=\card{L^+(n,k)} && \text{(by Theorem~\ref{thm:ArcEli}).}
\end{align*}

When $n$ or $k$ are odd, we use the fact that $L^+(n,k)=L(n,k)$ by definition. When $n$ is odd, summing over $k$ gives $\card{\bS_1(n)} = \card{L(n)}$.
\end{proof}

Figure~\ref{fig:diagram} summarizes the relationships proved in this section.

\begin{figure}[h]
$$\begin{array}{cclc}
\CVP(n,k)\cup\CVP(n,k+1) & \overunderset{\text{bijective}}{\text{Thm.~\ref{thm:ArcEli}}}{\longleftrightarrow} & L^+(n,k)\\
\updownarrow \text{\scriptsize non-bijective, Thm.~\ref{thm:S=CVP}} \updownarrow & &\\
S_1(n,k-1)\cup S_1(n,k) & \overunderset{\text{bijective}}{\text{Eq.~\eqref{eq:SS'}}}{\longleftrightarrow} & \bS_1(n,k)
\end{array}$$
\caption{A diagram of the relationships described in Section~\ref{sec:r=1}, for $1\le k\le n-1$.}
\label{fig:diagram}
\end{figure}

We conclude this section with two immediate consequences of Theorem~\ref{thm:ArcEli} obtained by expressing $\CVP(n)$ as a union of sets $\CVP(n,k)\cup\CVP(n,k+1)$ over all odd or over all even values of $k$, respectively.

\begin{corollary}\label{cor:CVPbijections}
For any~$n$, the map $\Psi$ gives bijections
$$\CVP(n)\to \bigcup_{k \text{ odd}}  L(n,k) \quad\text{and}\quad \CVP(n)\to\bigcup_{k \text{ even}} L^+(n,k).$$
\end{corollary}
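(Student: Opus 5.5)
The plan is to partition $\CVP(n)$ into blocks of the form $\CVP(n,k)\cup\CVP(n,k+1)$ and apply the bijection $\Psi$ of Theorem~\ref{thm:ArcEli} on each block. Since $\CVP(n,0)=\CVP(n,n+1)=\emptyset$, every $\sigma\in\CVP(n)$ lies in exactly one $\CVP(n,j)$ with $1\le j\le n$. Each such $j$ can be paired with its neighbour in two different ways, depending on whether the smaller index of the pair is odd or even.

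For the first bijection, I pair $j$ with $k\in\{j-1,j\}$ chosen odd, that is, $k=j$ if $j$ is odd and $k=j-1$ if $j$ is even. This gives the disjoint decomposition
$$\CVP(n)=\bigsqcup_{\substack{0\le k\le n\\ k\text{ odd}}}\bigl(\CVP(n,k)\cup\CVP(n,k+1)\bigr).$$
For the second bijection, I take $k$ even instead, with $k$ ranging over $0\le k\le n$:
$$\CVP(n)=\bigsqcup_{\substack{0\le k\le n\\ k\text{ even}}}\bigl(\CVP(n,k)\cup\CVP(n,k+1)\bigr).$$
In each case the blocks are disjoint, because the intervals $\{k,k+1\}$ with $k$ of fixed parity are disjoint. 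They also cover all of $\{1,\dots,n\}$, with the boundary blocks handled by the empty-set convention.

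On each block, $\Psi$ is a bijection onto $L^+(n,k)$ by Theorem~\ref{thm:ArcEli}. The description of $\Psi$ depends on $k$: a $1$ is written for $a_i\le k$. So $\Psi$ is applied blockwise, with the value of $k$ determined by the block containing $\sigma$. The targets $L^+(n,k)$ for distinct $k$ must be disjoint so that the union of the blockwise bijections is again a bijection.

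For odd $k$ we have $L^+(n,k)=L(n,k)$ by definition~\eqref{eq:L+}. These sets are disjoint for distinct $k$ because the number of ones differs, which gives the first map.

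For even $k$ the one subtle point, and the only real obstacle, arises. When $k\equiv2\bmod4$ and $n$ is even, $L^+(n,k)$ contains $L(n/2,k/2)$, which consists of necklaces of a different length. I would treat $L^+(n,k)$ as a formal disjoint union, or equivalently tag each element with its $k$. Elements of $L(n,k)$ are then distinguished by length and number of ones. Elements of $L(n/2,k/2)$ have length $n/2$ and an odd number $k/2$ of ones, so they cannot coincide with each other for different $k$. They also cannot coincide with any $L(n,k')$, whose elements have length $n$.

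Hence the union $\bigcup_{k\text{ even}}L^+(n,k)$ is disjoint, and the blockwise $\Psi$ is a bijection onto it, which gives the second map.
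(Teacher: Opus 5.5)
Your proof is correct and is essentially the paper's own argument: partition $\CVP(n)$ into blocks $\CVP(n,k)\cup\CVP(n,k+1)$ over odd (respectively even) $k$, and apply the bijection of Theorem~\ref{thm:ArcEli} on each block. Your additional checks — the boundary blocks via the convention $\CVP(n,0)=\CVP(n,n+1)=\emptyset$, and the disjointness of the targets by length and number of ones — spell out details the paper leaves implicit.
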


The first union (over odd $k$) is the set of primitive necklaces of length $n$ with an odd number of ones.
By equation~\eqref{eq:L+}, the second union (over even $k$) is the set of primitive necklaces of length $n$ with an even number of ones, together, if $n$ is even, with the set of primitive necklaces of length $n/2$ with an odd number of ones.
Thus, Corollary~\ref{cor:CVPbijections} recovers the bijections described in~\cite{Baker} between $\CVP(n)$ and these two sets, denoted in~\cite{Baker} by $N^-(n)$ and $\tilde{N}^+(n)$, respectively.

\section{The general framework} \label{sec:general}

\subsection{Formulas for arbitrary $r$}

In Section~\ref{sec:r=0} we discussed a well-known relation between subsets that sum to $0\bmod n$ and arbitrary necklaces. In Section~\ref{sec:r=1} we considered a seemingly new relation between subsets that sum to $1\bmod n$ and primitive necklaces. 
In this section we show that these relations are instances of a more general framework that relates subsets of $[n]$ that sum to $r \bmod n$ and binary necklaces whose co-periods (as defined in Section~\ref{sec:necklaces}) divide $r$. 
When $r = 0$, these are necklaces with no restrictions (Section~\ref{sec:r=0}), and when $r = 1$, these are primitive necklaces (Section~\ref{sec:r=1}).

To compare the quantities $\card{\bS_r(n, k)}$ and $\card{N_r(n, k)}$, we will first show that they can both be expressed in a similar form. These expressions involve the following sums of powers of a primitive $d$-root of unity $\droot$, often called {\em Ramanujan sums} (see \cite[Ch.~8.3]{apostol}, \cite{apostol-paper}, or \cite[A086831]{oeis}):
\begin{equation}\label{eq:Ramanujan}
        c_d(r) \coloneqq \sum_{\substack{j = 1 \\ \gcd(j, d) = 1}}^d \droot^{j r} = \sum_{j \mid \gcd(r, d)} j \mu(d / j) 
               = \mu(d / \gcd(r, d))  \frac{\phi(d)}{\phi(d / \gcd(r, d))}.
\end{equation}

The next formula for $\card{\bS_r(n, k)}$ is derived using ideas from~\cite{subsetmodprime,jpl}, even though it does not appear explicitly in these papers. A similar argument was also used in our proof of Lemma~\ref{lem:S1}.

\begin{lemma}\label{lem:S'r}
    For any $0\le k\le n$ and $0\le r< n$, 
    \[
        \card{\bS_r(n, k)} = \frac{1}{n} \sum_{d \mid \gcd(n, k)} {n/d \choose k/d} c_d(r) (-1)^{k/d + k}
    \]
\end{lemma}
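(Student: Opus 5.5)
We will mirror the proof of Lemma~\ref{lem:S1}, now applied to subsets of $\{0,1,\dots,n-1\}$ (identifying $n$ with $0$), which is natural since all residues then appear equally often. Consider
\[
F(x,t)=\prod_{j=0}^{n-1}(1+xt^j),
\]
whose coefficient of $x^k t^s$ counts $k$-subsets of $\{0,\dots,n-1\}$ with sum $s$. Extracting sums congruent to $r \bmod n$ by the roots-of-unity filter gives
\[
\sum_{k=0}^n \card{\bS_r(n,k)}x^k=\frac1n\sum_{\ell=0}^{n-1}\nroot^{-\ell r}F(x,\nroot^\ell).
\]

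For fixed $\ell$, set $\omega=\nroot^\ell$. This is a primitive root of unity of order $m=n/\gcd(n,\ell)$. As $j$ runs over $0,\dots,n-1$, each residue modulo $m$ occurs exactly $n/m$ times, so there is no correction factor $1/(1+x)$ this time. Using $\prod_{s=0}^{m-1}(1+x\omega^s)=1-(-x)^m$, already established in the proof of Lemma~\ref{lem:S1}, we get
\[
F(x,\omega)=\bigl(1-(-x)^m\bigr)^{n/m}.
\]
Next group the terms by $d\coloneqq m$, the order of $\omega$, so that $d$ runs over the divisors of $n$. The values $\ell$ with $\nroot^\ell$ of order exactly $d$ are precisely those for which $\nroot^\ell$ runs over the primitive $d$th roots of unity. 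Hence the inner sum is $\sum \droot^{-jr}$ over $\gcd(j,d)=1$, which equals $c_d(-r)=c_d(r)$, since Ramanujan sums are real (replace $j$ by $d-j$). This yields
\[
\sum_k\card{\bS_r(n,k)}x^k=\frac1n\sum_{d\mid n}c_d(r)\bigl(1-(-x)^d\bigr)^{n/d}.
\]

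Finally we extract the coefficient of $x^k$. Expanding $\bigl(1-(-x)^d\bigr)^{n/d}=\sum_i\binom{n/d}{i}(-1)^i(-1)^{di}x^{di}$, a contribution to $x^k$ requires $d\mid k$ and $i=k/d$, with sign $(-1)^{k/d}(-1)^{k}$. This gives exactly the stated formula with the sum restricted to $d\mid\gcd(n,k)$.

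There is no real obstacle here. The main care is needed in two places: the reindexing from $\ell$ to divisors $d$ (keeping straight whether $d$ is $\gcd(n,\ell)$ or $n/\gcd(n,\ell)$, which was swapped in Lemma~\ref{lem:S1}), and the sign bookkeeping $(-1)^{k/d+k}$.
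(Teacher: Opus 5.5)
Your proof is correct and follows essentially the same route as the paper: a roots-of-unity filter on the product generating function, evaluation via $\prod_{s=0}^{m-1}(1+x\omega^s)=1-(-x)^m$, grouping by divisors to produce Ramanujan sums, and extraction of the coefficient of $x^k$. The differences are cosmetic. You index the elements by $\{0,\dots,n-1\}$, which gives the same product at roots of unity. You also group directly by the order $d=n/\gcd(n,\ell)$, whereas the paper groups by $\gcd(n,\ell)$ and then substitutes $d\mapsto n/d$.
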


\begin{proof}
    Consider the generating function 
    \begin{equation*}
        \bar{f}(x,t) = \prod_{j = 1}^n (1 + x t^j)
    \end{equation*}
    for subsets of $[n]$ with respect to their size and their sum. As in the proof of Lemma~\ref{lem:S1}, letting $\nroot$ be a primitive $n$th root of unity, we have
    \begin{equation*}
        \sum_{k=0}^n \card{\bS_r(n, k)} x^k = \frac{1}{n} \sum_{\ell = 0}^{n-1} \nroot^{-r\ell} \bar{f}(x,\nroot^{\ell}).
    \end{equation*}
    Following the steps in that proof, and letting $m=n/\gcd(n,\ell)$, we can write
    $$\bar{f}(x,\nroot^\ell)=\prod_{j=1}^n(1+x\nroot^{\ell j})
    =\left(\prod_{s=0}^{m-1}(1+x\nroot^{\ell s})\right)^{n/m}=\left(1-(-x)^m\right)^{n/m},$$
    and deduce
    \begin{equation*}
        \sum_{k=0}^n \card{\bS_r(n, k)} x^k = \frac{1}{n} \sum_{d \mid n} \left(1 - (-x)^{n/d}\right)^d \left( \sum_{\substack{\ell = 0 \\ \gcd(n, \ell) = d}}^{n-1} \nroot^{-r\ell} \right).
    \end{equation*}
    Because $\gcd(n, \ell) = d$, the root of unity $\nroot^{-\ell}$ has order $n / d$.
    Therefore the rightmost summation is the sum of the $r$th powers of roots of
    unity of order $n / d$, which is precisely $c_{n/d}(r)$. Replacing $d$ with $n/d$, we obtain
    \begin{equation*}
        \sum_{k=0}^n \card{\bS_r(n, k)} x^k = \frac{1}{n} \sum_{d \mid n} \left(1 - (-x)^d\right)^{n/d} c_d(r).
    \end{equation*}
    Finally, applying the binomial theorem to $\left(1 - (-x)^d\right)^{n/d}$ and extracting the coefficient of $x^k$ yields the stated expression for $\card{\bS_r(n, k)}$.
\end{proof}

On the other hand, a similar formula for $\card{N_r(n, k)}$ follows from recent work of Hadjicostas \cite[Thm.~1]{hadji-necklaces} enumerating $q$-ary necklaces with co-period dividing~$r$.

\begin{lemma}[\cite{hadji-necklaces}]\label{lem:Nr}
    For any $0\le k\le n$ and $0\le r< n$,
    \[
        \card{N_r(n, k)} = \frac{1}{n} \sum_{d \mid \gcd(n, k)} {n/d \choose k/d} c_d(r).
    \]
\end{lemma}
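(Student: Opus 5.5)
We prove the formula for $\card{N_r(n,k)}$ by M\"obius inversion over co-periods, in the spirit of the proof of Lemma~\ref{lem:NL}, and then rearrange the resulting double sum into Ramanujan sums via equation~\eqref{eq:Ramanujan}.

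Let $g(m,i)=m\,\card{L(m,i)}$ be the number of primitive binary words of length $m$ with $i$ ones. From Lemma~\ref{lem:NL},
$$g(m,i)=\sum_{e\mid\gcd(m,i)}\mu(e)\binom{m/e}{i/e}.$$
A necklace in $N(n,k)$ has co-period $j$ exactly when it is the class of $v^j$ for a primitive $v$ of length $n/j$ with $k/j$ ones. Here $j\mid\gcd(n,k)$, and distinct necklaces of this form correspond to distinct necklaces $(v)\in L(n/j,k/j)$. Hence
$$\card{N_r(n,k)}=\sum_{\substack{j\mid\gcd(n,k)\\ j\mid r}}\card{L(n/j,k/j)}=\sum_{j\mid\gcd(n,k,r)}\frac{j}{n}\,g(n/j,k/j).$$
We use the convention $j\mid 0$ for all $j$, so that the case $r=0$ returns $\card{N(n,k)}$.

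Substituting the formula for $g$ gives
$$\card{N_r(n,k)}=\frac1n\sum_{j\mid\gcd(n,k,r)}j\sum_{e\mid\gcd(n/j,k/j)}\mu(e)\binom{n/(je)}{k/(je)}.$$
Set $d=je$, which runs over the divisors of $\gcd(n,k)$. Exchanging the order of summation, the coefficient of $\binom{n/d}{k/d}$ is
$$\sum_{\substack{j\mid d\\ j\mid r}}j\,\mu(d/j)=\sum_{j\mid\gcd(r,d)}j\,\mu(d/j)=c_d(r),$$
by the middle expression in equation~\eqref{eq:Ramanujan}. This yields
$$\card{N_r(n,k)}=\frac1n\sum_{d\mid\gcd(n,k)}\binom{n/d}{k/d}c_d(r),$$
which is the claimed formula.

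The only delicate points are two pieces of bookkeeping. First, the decomposition of $N(n,k)$ by exact co-period must be a genuine partition, which holds because each word is uniquely a power of a primitive word (as in the proof of Lemma~\ref{lem:NL}). Second, the change of variables $(j,e)\mapsto(d,j)$ must be handled correctly: the conditions $j\mid\gcd(n,k,r)$ and $e\mid\gcd(n/j,k/j)$ are together equivalent to $d\mid\gcd(n,k)$, $j\mid d$ and $j\mid r$. Everything else is direct substitution.
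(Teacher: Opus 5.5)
Your proof is correct, but it does not follow the paper, which gives no argument for this lemma. The paper simply cites Hadjicostas~\cite[Thm.~1]{hadji-necklaces}, whose result is the multivariate logarithmic generating function $-\sum_{d\ge1}\frac{c_d(r)}{d}\log(1-x_0^d-\cdots-x_{q-1}^d)$ for $q$-ary necklaces with co-period dividing $r$, quoted in the proof of Theorem~\ref{thm:q-N}. The binary formula follows by setting $x_0=y$, $x_1=yx$ and extracting the coefficient of $y^nx^k$.

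You instead partition $N_r(n,k)$ by exact co-period, getting $\card{N_r(n,k)}=\sum_{j\mid\gcd(n,k,r)}\card{L(n/j,k/j)}$. You then substitute the M\"obius formula of Lemma~\ref{lem:NL} and recognize the regrouped coefficient $\sum_{j\mid\gcd(r,d)}j\,\mu(d/j)$ as $c_d(r)$ via the middle expression in~\eqref{eq:Ramanujan}. The bijection $(v)\mapsto(v^j)$, the reindexing $d=je$, and the edge cases $r=0$ and $k\in\{0,n\}$ are all handled correctly.

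Your route gives a short, self-contained proof using only tools already in the paper. It also explains why Ramanujan sums appear at all: $c_d(r)$ is the M\"obius transform of the weight $j\,[j\mid r]$ that selects the admissible co-periods. The citation delivers the full content-refined $q$-ary statement at once, which the paper needs for Theorem~\ref{thm:q-N}. Your argument extends verbatim to that setting, though: replace binomial coefficients by multinomial coefficients and $\gcd(n,k)$ by the gcd of the letter multiplicities. So it could supply that input as well.
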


\subsection{Easy consequences}
The fact that the expressions in Lemmas~\ref{lem:S'r} and~\ref{lem:Nr} are so similar allows us to use number-theoretic properties to deduce some
combinatorial equalities.
Let us start with the simple case when $\gcd(n, k, r) = 1$. 

\begin{proposition}\label{prop:gcd=1}
    If $\gcd(n, k, r) = 1$, then $\card{\bS_r(n, k)} = \card{\bS_1(n, k)}$ and $\card{N_r(n, k)} = \card{N_1(n, k)}$. 
\end{proposition}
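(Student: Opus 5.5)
Both identities will follow by comparing the explicit formulas of Lemma~\ref{lem:S'r} and Lemma~\ref{lem:Nr} term by term. Write $g=\gcd(n,k)$. In both formulas the sum runs over divisors $d\mid g$, and the dependence on $r$ enters only through the Ramanujan sum $c_d(r)$; the other factors, ${n/d \choose k/d}$ and $(-1)^{k/d+k}$, do not involve $r$. It is therefore enough to show that $c_d(r)=c_d(1)$ for every divisor $d$ of $g$.

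Fix $d\mid g$. Since $\gcd(n,k,r)=\gcd(g,r)=1$ and $d\mid g$, we get $\gcd(d,r)=1$. By the closed form in equation~\eqref{eq:Ramanujan}, $c_d(r)=\mu(d/\gcd(r,d))\,\phi(d)/\phi(d/\gcd(r,d))$ depends on $r$ only through $\gcd(r,d)$. Here $\gcd(r,d)=1=\gcd(1,d)$, so $c_d(r)=\mu(d)=c_d(1)$.

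One could also argue directly from the definition. The map $j\mapsto jr$ permutes the residues modulo $d$ that are coprime to $d$, so $\sum_{\gcd(j,d)=1}\droot^{jr}=\sum_{\gcd(j,d)=1}\droot^{j}$.

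Substituting $c_d(r)=c_d(1)$ into Lemma~\ref{lem:S'r} gives $\card{\bS_r(n,k)}=\card{\bS_1(n,k)}$, and substituting it into Lemma~\ref{lem:Nr} gives $\card{N_r(n,k)}=\card{N_1(n,k)}$.

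The only point needing care is the edge case $r=0$. Here $\gcd(n,k,0)=\gcd(n,k)$, so the hypothesis forces $g=1$. The only divisor is then $d=1$, and $c_1(0)=1=c_1(1)$, so the argument above still applies. There is no real obstacle: the whole argument is the observation that every index $d$ in the sums is coprime to $r$.
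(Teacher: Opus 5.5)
Your proposal is correct and follows the paper's own proof: every divisor $d$ of $\gcd(n,k)$ is coprime to $r$, so $c_d(r)=c_d(1)$ by equation~\eqref{eq:Ramanujan}, and both equalities then follow termwise from Lemmas~\ref{lem:S'r} and~\ref{lem:Nr}. Your separate treatment of $r=0$ is harmless, but not needed, since $\gcd(1,0)=1$ already falls under the general argument.
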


\begin{proof}
    We have $\gcd(r, d) = 1$ for all $d \mid \gcd(n, k)$. Thus, by equation~\eqref{eq:Ramanujan}, $c_d(r) = c_d(1)$ in this case. The equalities now follow from Lemmas~\ref{lem:S'r} and~\ref{lem:Nr}, respectively. 
\end{proof}

It is natural to ask whether Proposition~\ref{prop:gcd=1} has a bijective proof.
The second equality is straightforward: in fact, $N_r(n, k)= N_1(n, k)$ when $\gcd(n, k, r) = 1$, since the co-period of a necklace in $N_r(n, k)$ has to divide $r$, $n$, and $k$. On the other hand, proving the equality $\card{\bS_r(n, k)} = \card{\bS_1(n, k)}$ bijectively will take more work. The bijection will rely on the following lemma. 

\begin{lemma}\label{lem:Bezout}
    If $\gcd(n, k, r) = 1$, then there exist integers $x, y, z$ such that
    \begin{equation}\label{eq:Bezout}
        nx + ky + rz = 1
    \end{equation}
    and $\gcd(z, n) = 1$.
\end{lemma}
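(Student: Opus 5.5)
Let $g=\gcd(n,k)$. Then $\gcd(g,r)=\gcd(n,k,r)=1$, so the plan is to first get a coefficient $z_0$ of $r$ that is a unit modulo $g$, and then lift it to a unit modulo $n$ while staying in the same residue class modulo $g$. The second step is the only one with real content.

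First, since $\gcd(r,g)=1$, there is an integer $z_0$ with $rz_0\equiv 1 \pmod g$. This $z_0$ is automatically coprime to $g$.

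Second, we lift $z_0$ to an integer $z\equiv z_0 \pmod g$ with $\gcd(z,n)=1$. This is the standard fact that the reduction map $(\mathbb{Z}/n\mathbb{Z})^\times\to(\mathbb{Z}/g\mathbb{Z})^\times$ is surjective for $g\mid n$. If I want a self-contained argument rather than a citation, I will use the following explicit construction:
\begin{itemize}
\item Let $P$ be the product of the primes dividing $n$ but not dividing $z_0$. If there are no such primes, set $P=1$.
\item Set $z=z_0+gP$.
\item Take any prime $p\mid n$. If $p\mid z_0$, then $p\nmid g$ (because $\gcd(z_0,g)=1$) and $p\nmid P$ (by definition of $P$), so $p\nmid gP$ and hence $p\nmid z$.
\item If $p\nmid z_0$, then $p\mid P$, so $p\mid gP$ and again $p\nmid z$.
\end{itemize}
In both cases no prime divisor of $n$ divides $z$, so $\gcd(z,n)=1$. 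The case $z_0=0$ cannot occur unless $g=1$. When $g=1$ we can simply take $z=1$, or note that the construction still works.

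Third, we have $rz\equiv rz_0\equiv 1\pmod g$, so $1-rz$ is a multiple of $g=\gcd(n,k)$. By Bézout's identity for $n$ and $k$, there are integers $x,y$ with $nx+ky=1-rz$. This gives equation~\eqref{eq:Bezout} with $\gcd(z,n)=1$, as required.
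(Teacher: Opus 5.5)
Your proof is correct, but it is organized differently from the paper's.

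The paper fixes one B\'ezout triple $(x,y,z)$ and moves along the family $(x,y-rj,z+kj)$. So it looks for a unit modulo $n$ inside the progression $z+k\mathbb{Z}$. Primes of $n$ that divide $k$ are handled automatically, since they cannot divide $z$. For each remaining prime it avoids one residue class of $j$, and then invokes the Chinese remainder theorem to choose $j$.

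You instead note that the admissible values of $z$ form exactly the class $\{z : rz\equiv 1 \pmod g\}$ with $g=\gcd(n,k)$. This is a larger progression than the paper's, since $g\mid k$. You then reduce the problem to lifting a unit modulo $g$ to a unit modulo $n$, give the lift explicitly as $z_0+gP$, and recover $x,y$ only at the end from B\'ezout for $n$ and $k$.

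Your route gives an explicit witness without the Chinese remainder theorem, and it identifies the lemma with the standard surjectivity of $(\mathbb{Z}/n\mathbb{Z})^\times\to(\mathbb{Z}/g\mathbb{Z})^\times$. The paper's route never introduces $g$ and only perturbs a single B\'ezout solution. Your edge cases, where $z_0=0$ forces $g=1$ and where $P=1$, are handled correctly.
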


\begin{proof}
    The existence of $x, y, z$ satisfying equation~\eqref{eq:Bezout} is guaranteed by
    B\'ezout's identity. Given a solution $(x,y,z)$, any triple of the form $(x,
    y - rj, z + kj)$, where $j$ is an arbitrary integer, is a solution to
    equation~\eqref{eq:Bezout} as well. Therefore, it suffices to show there is
    some integer $j$ such that $\gcd(z + kj, n) = 1$.

    Let $p_1,\dots,p_m$ be the prime factors of $n$. We want to find a $j$ such that $p_i\nmid z+kj$ for all $i$. For factors such that $p_i\mid k$, having $p_i\mid z+kj$ would imply $p_i\mid z$. But then $p_i$ would divide $n$, $k$, and $z$, contradicting equation~\eqref{eq:Bezout}.
    
    For factors such that $p_i\nmid k$, the condition $p_i\nmid z+kj$ can be expressed as $j \not\equiv
    -z / k \pmod{p_i}$. There are finitely many such factors $p_i$, each one
    forcing $j$ to avoid a single congruence class modulo $p_i$. By the Chinese
    remainder theorem, there are infinitely many $j$ satisfying all these
    constraints.
\end{proof}

\begin{proposition}    
    If $\gcd(n, k, r) = 1$, then there exist integers $y$ and $z$ such that the
    map $A \mapsto (zA + y) \bmod n$ is a bijection between $\bS_r(n, k)$ and
    $\bS_1(n, k)$.
\end{proposition}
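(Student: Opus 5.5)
Take $x,y,z$ from Lemma~\ref{lem:Bezout}, so that $nx+ky+rz=1$ and $\gcd(z,n)=1$. Work in $\mathbb{Z}/n\mathbb{Z}$, identifying $[n]$ with $\{0,1,\dots,n-1\}$ as noted after the definition of $\bS_r(n,k)$ ($n\equiv 0$). The map in question is $\phi(A)=\{(za+y)\bmod n : a\in A\}$, applied elementwise to a subset $A$.

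First, $\phi$ sends $k$-element subsets to $k$-element subsets. The affine map $a\mapsto za+y$ on $\mathbb{Z}/n\mathbb{Z}$ is a bijection because $z$ is invertible modulo $n$. Its inverse is $b\mapsto z^{-1}(b-y)$, so distinct elements of $A$ have distinct images and $|\phi(A)|=k$.

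Next, check the sums. If $A\in\bS_r(n,k)$, then
\[\sum_{b\in\phi(A)} b\equiv z\sum_{a\in A}a+ky\equiv zr+ky\equiv 1-nx\equiv 1 \pmod n,\]
so $\phi(A)\in\bS_1(n,k)$. This is the only place where the identity from Lemma~\ref{lem:Bezout} is used. The choice of the pair $(y,z)$ is exactly what forces the translated sum to land on $1$.

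Finally, show that $\phi$ is a bijection. It is injective on all subsets, since it is induced by a permutation of $\mathbb{Z}/n\mathbb{Z}$. For surjectivity, take $B\in\bS_1(n,k)$ and let $A=\{z^{-1}(b-y)\}$. Then $\phi(A)=B$, and we need $\sum A\equiv r$. From $z\sum A+ky\equiv 1\equiv zr+ky$ we get $z\sum A\equiv zr$, and cancelling the invertible $z$ gives $\sum A\equiv r$. So $A\in\bS_r(n,k)$.

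The only nontrivial ingredient is arranging $\gcd(z,n)=1$ while keeping the Bézout identity, and Lemma~\ref{lem:Bezout} already supplies this. The rest is bookkeeping, with care taken to reduce modulo $n$ consistently under the identification $n\equiv0$.
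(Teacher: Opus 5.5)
Your proof is correct and follows the paper's argument: you take $(x,y,z)$ from Lemma~\ref{lem:Bezout}, use $\gcd(z,n)=1$ to see that $a\mapsto za+y$ permutes $\mathbb{Z}/n\mathbb{Z}$, and compute the image sum as $zr+ky\equiv 1 \pmod n$. You also check explicitly, by cancelling the invertible $z$, that preimages of sets in $\bS_1(n,k)$ have sum $r$; the paper leaves this step implicit in the phrase ``the map is invertible.''
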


\begin{proof}
    By Lemma~\ref{lem:Bezout}, there exist integers $x, y, z$ such that
    $nx + ky + rz = 1$ and $\gcd(z, n) = 1$. 
    Given a set $A=\{a_i:1\le i\le k\}\in \bS_r(n, k)$, consider the set $A'=\{z a_i + y \bmod n:1\le i \le k\}$. Since $\gcd(z,n)=1$, the set $A'$ contains $k$ distinct elements, and the map $A\mapsto A'$ is invertible. Additionally, the sum of the elements of $A'$ is congruent to 
    $$\sum_{i=1}^k (z a_i + y) \equiv z \left(\sum_{i=1}^k a_i\right) + ky \equiv zr + ky = 1 - nx \equiv 1 \pmod{n}.\qedhere$$
\end{proof}

\begin{example}
    For $n=6$, $k=3$ and $r=2$, the integers $x=0$, $y=1$ and $z=-1$ satisfy $nx + ky + rz = 1$ and $\gcd(z, n) = 1$. Thus, the map $A \mapsto (-A + 1) \bmod n$ is a bijection from $\bS_2(6,3)=\{\{0,3,5\},\{1,2,5\},\{1,3,4\}\}$ to $\bS_1(6,3)=\{\{1,2,4\},\{0,2,5\},\{0,3,4\}\}$.
\end{example}

Another interesting case occurs when $n$ or $k$ are odd. By comparing Lemmas~\ref{lem:S'r} and~\ref{lem:Nr}, we see that the only difference in the formulas for $\card{\bS_r(n, k)}$ and $\card{N_r(n, k)}$ is the alternating factor $(-1)^{k/d + k}$. 
If either $n$ or $k$ are odd, then $(-1)^{k/d + k} = 1$ for all $d \mid \gcd(n, k)$, and we obtain the following result.

\begin{theorem}\label{thm:odd_nk}
    If $n$ or $k$ are odd, then $$\card{\bS_r(n, k)} = \card{N_r(n,
    k)}.$$ Consequently, $\card{\bS_r(n)} = \card{N_r(n)}$ if $n$ is odd.
\end{theorem}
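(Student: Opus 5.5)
The plan is to compare the two closed formulas term by term, using Lemma~\ref{lem:S'r} for $\card{\bS_r(n,k)}$ and Lemma~\ref{lem:Nr} for $\card{N_r(n,k)}$. Both are sums over the divisors $d\mid\gcd(n,k)$ with identical summands $\frac1n\binom{n/d}{k/d}c_d(r)$. The only difference is the extra factor $(-1)^{k/d+k}$ in the subset-sum formula. So it suffices to show that this sign equals $1$ for every $d\mid\gcd(n,k)$ whenever $n$ or $k$ is odd.

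The key step is a parity observation. If $n$ is odd, then every divisor $d$ of $\gcd(n,k)$ divides $n$, so $d$ is odd. If $k$ is odd, then every such $d$ divides $k$, so again $d$ is odd. In either case $d$ is odd, and writing $k=d\cdot(k/d)$ shows that $k$ and $k/d$ have the same parity. Hence $k/d+k$ is even and $(-1)^{k/d+k}=1$. The two sums therefore agree term by term, which gives $\card{\bS_r(n,k)}=\card{N_r(n,k)}$.

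For the consequence, assume $n$ is odd. Then the equality holds for every $0\le k\le n$. The sets $\bS_r(n,k)$ for different $k$ are disjoint, as are the sets $N_r(n,k)$, so summing over $k$ via equation~\eqref{eq:A} gives $\card{\bS_r(n)}=\card{N_r(n)}$.

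There is no real obstacle once Lemmas~\ref{lem:S'r} and~\ref{lem:Nr} are available. The only point needing care is that both lemmas use the same normalization of the Ramanujan sum $c_d(r)$, including the case $r=0$, where $c_d(0)=\phi(d)$. This matches Lemma~\ref{lem:NL}, so the comparison is legitimate for all $0\le r<n$.
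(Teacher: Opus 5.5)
Your proposal is correct and is the paper's own argument. The paper likewise notes that the formulas of Lemmas~\ref{lem:S'r} and~\ref{lem:Nr} differ only by the factor $(-1)^{k/d+k}$, which equals $1$ for every $d\mid\gcd(n,k)$ when $n$ or $k$ is odd (since $d$ is then odd), and it obtains the statement for $\card{\bS_r(n)}$ and $\card{N_r(n)}$ by summing over $k$ when $n$ is odd.
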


This generalizes the fact that $\card{\bS_0(n, k)} = \card{N(n, k)}$ for odd $n$, which was mentioned in Section~\ref{sec:r=0}, and the fact that 
$\card{\bS_1(n, k)} = \card{L(n, k)}$ for odd $n$ or $k$, given in Corollary~\ref{cor:S'1=L}. 

\subsection{Characterizing equality}

The converse of Theorem~\ref{thm:odd_nk}, as stated, does not hold:
there are cases where $n$ and $k$ are both even, yet $\card{\bS_r(n, k)} = \card{N_r(n, k)}$ (see Table~\ref{tab:diff}).
Our next goal is to provide necessary and sufficient conditions on $n,k,r$ for this equality to hold, and to determine by how much it fails otherwise. Next we state the main result in this section.
We use $\nu_2(m)$ to denote the 2-adic valuation function, defined as the highest power of $2$ that divides $m$. In particular, $\nu_2(0)=\infty$.

\begin{theorem}\label{thm:Nr=S'r}
   Let $0\le k\le n$ and $0\le r< n$. Then $\card{\bS_r(n, k)} = \card{N_r(n, k)}$ if and only if any of the following conditions hold:
    \begin{enumerate}[(a)]
        \item $n$ is odd,
        \item $k$ is odd,
        \item $\nu_2(n) < \nu_2(k)$ (including $k = 0$),
        \item $\nu_2(k) - \nu_2(r) \geq 2$,
        \item $k = n$ is even and $r \notin \{0, n / 2\}$.
    \end{enumerate}
    Additionally, when none of these conditions hold, we have 
    \begin{equation}\label{eq:difference}
        \card{N_r(n, k)} - \card{\bS_r(n, k)}= \pm \card{N_r(n/2^{\nu_2(k)}, k/2^{\nu_2(k)})}
        =
        \pm \card{\bS_r(n/2^{\nu_2(k)}, k/2^{\nu_2(k)})},
    \end{equation}
    where the signs are negative if $\nu_2(k) - \nu_2(r) =1$, and positive otherwise.
\end{theorem}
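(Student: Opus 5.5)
The plan is to subtract the two closed formulas and show that the difference collapses to a single Ramanujan-sum factor times a smaller necklace count. Subtracting Lemma~\ref{lem:S'r} from Lemma~\ref{lem:Nr} gives
\[
D:=\card{N_r(n,k)}-\card{\bS_r(n,k)}=\frac{1}{n}\sum_{d\mid\gcd(n,k)}\binom{n/d}{k/d}c_d(r)\left(1-(-1)^{k/d+k}\right).
\]
If $n$ or $k$ is odd, then $D=0$ by Theorem~\ref{thm:odd_nk}. If $k=0$, condition (c) holds and both sides equal $1$. So assume $k\ge 2$ is even, and put $a=\nu_2(k)\ge1$. Then $1-(-1)^{k/d}$ equals $2$ when $k/d$ is odd and $0$ otherwise. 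Now $k/d$ is odd exactly when $\nu_2(d)=a$.

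If $a>\nu_2(n)$, no divisor of $\gcd(n,k)$ has $\nu_2(d)=a$, so $D=0$; this is condition (c). Otherwise set $n'=n/2^a$ and $k'=k/2^a$, so $k'$ is odd. The surviving divisors are exactly $d=2^a e$ with $e\mid\gcd(n',k')$ and $e$ odd. Since $\gcd(n',k')$ is odd, this is every divisor of $\gcd(n',k')$.

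The key step is that Ramanujan sums are multiplicative in $d$: for coprime $d_1,d_2$ we have $c_{d_1d_2}(r)=c_{d_1}(r)c_{d_2}(r)$. This follows from the closed form in \eqref{eq:Ramanujan} because $\mu$, $\phi$ and $\gcd(r,\cdot)$ behave multiplicatively. Hence $c_{2^a e}(r)=c_{2^a}(r)\,c_e(r)$, and since $n=2^a n'$,
\[
D=\frac{2\,c_{2^a}(r)}{2^a}\cdot\frac{1}{n'}\sum_{e\mid\gcd(n',k')}\binom{n'/e}{k'/e}c_e(r)=\frac{2\,c_{2^a}(r)}{2^a}\,\card{N_r(n',k')},
\]
using Lemma~\ref{lem:Nr} again. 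Because $k'$ is odd, Theorem~\ref{thm:odd_nk} gives $\card{N_r(n',k')}=\card{\bS_r(n',k')}$, which accounts for the second equality in \eqref{eq:difference}.

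Next I evaluate $c_{2^a}(r)$ from \eqref{eq:Ramanujan}, writing $b=\nu_2(r)$.
\begin{itemize}
\item If $a-b\ge2$, the factor $\mu(2^{a-b})$ vanishes, so $D=0$; this is condition (d).
\item If $a-b=1$, then $c_{2^a}(r)=-\phi(2^a)/\phi(2)=-2^{a-1}$, which gives the minus sign.
\item If $a\le b$ (including $r=0$), then $c_{2^a}(r)=\phi(2^a)=2^{a-1}$, which gives the plus sign.
\end{itemize}
This proves \eqref{eq:difference} whenever none of (a)--(d) hold.

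It remains to decide when $N_r(n',k')$ is empty, with $k'$ odd and $1\le k'\le n'$. If $k'<n'$, the Lyndon word $0^{n'-k'}1^{k'}$ is primitive, so it has co-period $1$, which divides $r$; hence $D\neq0$. If $k'=n'$, which is equivalent to $k=n$, the only necklace is $1^{n'}$, with co-period $n'$. So $N_r(n',k')$ is nonempty iff $n'\mid r$. Under $a-b\le1$, writing $r=n't$ with $0\le t<2^a$ forces $\nu_2(t)\ge a-1$, so $t\in\{0,2^{a-1}\}$, i.e.\ $r\in\{0,n/2\}$. Thus when $k=n$ is even and (d) fails, $D=0$ exactly when $r\notin\{0,n/2\}$, which is condition (e). Conversely, in every other situation $D\ne0$.

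I expect the main obstacle to be this final case analysis, matching emptiness of $N_r(n',k')$ exactly with condition (e) and checking that (d) and (e) do not overlap inconsistently. The multiplicativity of $c_d(r)$ should be routine from \eqref{eq:Ramanujan}.
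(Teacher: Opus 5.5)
Your proposal is correct and follows essentially the same route as the paper's proof (Lemma~\ref{lem:trivial-conditions} followed by the nonemptiness check): keep only the divisors with $\nu_2(d)=\nu_2(k)$, factor out $c_{2^{\nu_2(k)}}(r)$ by multiplicativity, recognize $n'\card{N_r(n',k')}$, and use $0^{n'-k'}1^{k'}$ for nonemptiness. The only differences are that you settle $k=n$ through the criterion $n'\mid r$ where the paper computes $\card{N_r(n,n)}$ and $\card{\bS_r(n,n)}$ directly, and that your remark on $k=0$ is slightly off: both sides equal $1$ only when $r=0$ and are both $0$ otherwise, which still gives the equality.
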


\begin{example}
    For $r=2$, $k=6$, and $n=2m$ (where $m\ge3$), Theorem~\ref{thm:Nr=S'r} states that $\card{\bS_2(2m, 6)} = \card{N_2(2m, 6)}$ if and only if $m=3$. For $m>3$, we have
    \begin{equation*}
        \card{N_2(2m, 6)} - \card{\bS_2(2m, 6)} = \card{N_2(m, 3)}.
    \end{equation*}
    As shown in Table~\ref{tab:diff}, the first few terms of this sequence, starting at $m=4$, are $
        1, 2, 3, 5, 7, 9, 12, 15, 18, \dots$.
    This is sequence A001840 in~\cite{oeis}, which also counts primitive necklaces of length $m$ with $3$ ones, noting that $N_2(n, 3) = N_1(n, 3)$.
\end{example}

\setlength{\tabcolsep}{4pt}
\begin{table}[h]
\resizebox{\textwidth}{!}{
\begin{tabular}{r|cccccccccccccccccccc}
$m\backslash k$ &0&1&2&3&4&5&6&7&8&9&10&11&12&13&14&15&16&17&18&19 \\ \hline
1 & 0 & 1&&&&&&&&&&&&&&&&&& \\
2 & 0 & 1 & -1&&&&&&&&&&&&&&&&& \\
3 & 0 & 1 & 0 & 0&&&&&&&&&&&&&&&& \\
4 & 0 & 1 & -1 & 1 & 0&&&&&&&&&&&&&&& \\
5 & 0 & 1 & 0 & 2 & 0 & 0&&&&&&&&&&&&&& \\
6 & 0 & 1 & -1 & 3 & 0 & 1 & 0&&&&&&&&&&&&& \\
7 & 0 & 1 & 0 & 5 & 0 & 3 & 0 & 0&&&&&&&&&&&& \\
8 & 0 & 1 & -1 & 7 & 0 & 7 & -1 & 1 & 0&&&&&&&&&&& \\
9 & 0 & 1 & 0 & 9 & 0 & 14 & 0 & 4 & 0 & 0&&&&&&&&&& \\
10 & 0 & 1 & -1 & 12 & 0 & 25 & -2 & 12 & 0 & 1 & 0&&&&&&&&& \\
11 & 0 & 1 & 0 & 15 & 0 & 42 & 0 & 30 & 0 & 5 & 0 & 0&&&&&&&& \\
12 & 0 & 1 & -1 & 18 & 0 & 66 & -3 & 66 & 0 & 18 & -1 & 1 & 0&&&&&&& \\
13 & 0 & 1 & 0 & 22 & 0 & 99 & 0 & 132 & 0 & 55 & 0 & 6 & 0 & 0&&&&&& \\
14 & 0 & 1 & -1 & 26 & 0 & 143 & -5 & 245 & 0 & 143 & -3 & 26 & 0 & 1 & 0&&&&& \\
15 & 0 & 1 & 0 & 30 & 0 & 200 & 0 & 429 & 0 & 333 & 0 & 91 & 0 & 7 & 0 & 0&&&& \\
16 & 0 & 1 & -1 & 35 & 0 & 273 & -7 & 715 & 0 & 715 & -7 & 273 & 0 & 35 & -1 & 1 & 0&&& \\
17 & 0 & 1 & 0 & 40 & 0 & 364 & 0 & 1144 & 0 & 1430 & 0 & 728 & 0 & 140 & 0 & 8 & 0 & 0&& \\
18 & 0 & 1 & -1 & 45 & 0 & 476 & -9 & 1768 & 0 & 2700 & -14 & 1768 & 0 & 476 & -4 & 45 & 0 & 1 & 0& \\
19 & 0 & 1 & 0 & 51 & 0 & 612 & 0 & 2652 & 0 & 4862 & 0 & 3978 & 0 & 1428 & 0 & 204 & 0 & 9 & 0 & 0 \\
\end{tabular}}
    \caption{The differences $\card{N_2(2m, 2k)} - \card{\bS_2(2m, 2k)}$ for $1 \leq m \le 19$ and $0\le k\le m$.}
    \label{tab:diff}
\end{table}

The proof of Theorem~\ref{thm:Nr=S'r} will rely on the following lemma.

\begin{lemma}
    \label{lem:trivial-conditions}
    If any of conditions (a), (b), (c) or (d) in Theorem~\ref{thm:Nr=S'r} hold, then $\card{\bS_r(n, k)} = \card{N_r(n, k)}$.
    If none of these hold, then $\card{N_r(n, k)} - \card{\bS_r(n, k)}$ is given by equation~\eqref{eq:difference}.
\end{lemma}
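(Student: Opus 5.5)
The plan is to subtract the formula of Lemma~\ref{lem:S'r} from that of Lemma~\ref{lem:Nr} and analyze the resulting single sum:
\[
\card{N_r(n,k)}-\card{\bS_r(n,k)}=\frac{1}{n}\sum_{d\mid\gcd(n,k)}\binom{n/d}{k/d}c_d(r)\left(1-(-1)^{k/d+k}\right).
\]

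First I dispose of (a) and (b). If $n$ or $k$ is odd, every divisor $d$ of $\gcd(n,k)$ is odd. Then $k/d\equiv k \pmod 2$, every factor $1-(-1)^{k/d+k}$ vanishes, and the difference is $0$ (this is Theorem~\ref{thm:odd_nk}). So assume $k\ge 2$ is even and set $a=\nu_2(k)\ge1$. The factor $1-(-1)^{k/d+k}=1-(-1)^{k/d}$ equals $2$ exactly when $k/d$ is odd, that is, when $\nu_2(d)=a$, and equals $0$ otherwise.

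Only divisors $d$ of $\gcd(n,k)$ with $\nu_2(d)=a$ contribute. If $\nu_2(n)<a$, which is condition (c) (note that $k=0$ falls here since $\nu_2(0)=\infty$), there is no such $d$ and the difference is $0$. Otherwise write $n'=n/2^a$ and $k'=k/2^a$, so $k'$ is odd. The contributing divisors are exactly $d=2^ae$ with $e$ odd and $e\mid\gcd(n',k')$. Since $c_d(r)$ is multiplicative in $d$ (clear from the closed form in equation~\eqref{eq:Ramanujan}), we have $c_{2^ae}(r)=c_{2^a}(r)\,c_e(r)$, and $c_{2^a}(r)$ factors out of the sum.

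Next I compute $c_{2^a}(r)$ from equation~\eqref{eq:Ramanujan}, with $g=\gcd(r,2^a)$:
\begin{itemize}
\item If $\nu_2(r)\le a-2$, then $2^a/g\ge4$ and $\mu(2^a/g)=0$. So $c_{2^a}(r)=0$, which is condition (d).
\item If $\nu_2(r)=a-1$, then $c_{2^a}(r)=\mu(2)\,\phi(2^a)/\phi(2)=-2^{a-1}$.
\item If $\nu_2(r)\ge a$ (including $r=0$), then $c_{2^a}(r)=\phi(2^a)=2^{a-1}$.
\end{itemize}
In the last two cases the difference becomes
\[
\pm\frac{2\cdot2^{a-1}}{n}\sum_{e\mid\gcd(n',k')}\binom{n'/e}{k'/e}c_e(r)=\pm\frac{1}{n'}\sum_{e\mid\gcd(n',k')}\binom{n'/e}{k'/e}c_e(r),
\]
with the negative sign exactly when $\nu_2(k)-\nu_2(r)=1$.

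By Lemma~\ref{lem:Nr}, this last sum is $\card{N_r(n',k')}$. Because $k'$ is odd, Theorem~\ref{thm:odd_nk} identifies it with $\card{\bS_r(n',k')}$, which gives equation~\eqref{eq:difference}.

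The one point needing care is that $r$ may exceed $n'-1$, while Lemma~\ref{lem:Nr} is stated for $0\le r<n$. I would handle this by noting that $c_e(r)$ depends only on $r \bmod e$, and that the sets $N_r$ and $\bS_r$ depend only on $r$ modulo the length. So $r$ is read modulo $n'$ throughout, with the divisibility conditions unchanged. Apart from this bookkeeping, the argument is the valuation case split above, and I expect no real obstacle.
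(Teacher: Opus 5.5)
Your proposal is correct and follows essentially the same route as the paper: subtract the formulas of Lemmas~\ref{lem:S'r} and~\ref{lem:Nr}, keep only the divisors with $\nu_2(d)=\nu_2(k)$, factor out $c_{2^{\nu_2(k)}}(r)$ by multiplicativity, evaluate it by cases on $\nu_2(r)$, and finish with Lemma~\ref{lem:Nr} and Theorem~\ref{thm:odd_nk} applied to the pair $(n',k')$ with $k'$ odd. Your extra remark that $r$ should be read modulo $n'$ handles a bookkeeping point the paper passes over silently. It works because $c_e(r)$, $N_r(n',k')$ and $\bS_r(n',k')$ depend only on $r \bmod n'$.
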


\begin{proof}
    By Lemmas~\ref{lem:S'r} and~\ref{lem:Nr}, we have
    \begin{align} \nonumber
    n\left(\card{N_r(n,k)}-\card{\bS_r(n,k)}\right)&=
        \sum_{d \mid \gcd(n, k)} {n/d \choose k/d} c_d(r)\,\left(1 - (-1)^{k/d + k}\right)\\
        \label{eq:difference2}
        &=
        2\sum_{d \mid \gcd(n, k)} {n/d \choose k/d} c_d(r)\,[k / d + k \text{ odd}].
    \end{align}
    We will show that if any of conditions (a), (b), (c) and (d) hold, then all the terms in this sum vanish.

    Every term where $d$ is odd vanishes because $k/d$ and $k$ have the same parity. This is the case when either $n$ or $k$ are odd, that is, when conditions (a) or (b) hold.

    Suppose that both $n$ and $k$ are even. Then $k/d +
    k$ is odd if and only if $k/d$ is odd, which happens precisely when
    $\nu_2(d) = \nu_2(k)$. In particular, if condition (c) holds, then $\nu_2(d)
    \leq \nu_2(n) < \nu_2(k)$ for any $d$ dividing $\gcd(n,k)$, so all the summands in equation~\eqref{eq:difference2} are zero.

    Suppose now that $\nu_2(n) \geq \nu_2(k)$. Letting $m = \nu_2(k)$, we
    can write 
    \begin{equation}\label{eq:k'n'}
    k=2^m k', \quad n=2^m n', \quad  d=2^m d',
    \end{equation}
    where $k'$ is odd. Equation~\eqref{eq:difference2} becomes
    \begin{equation}\label{eq:difference3}
        2 \sum_{d' \mid \gcd(n', k')} {n'/d' \choose k'/d'} c_{2^m d'}(r)=2\, c_{2^m}(r) \sum_{d' \mid \gcd(n', k')} {n'/d' \choose k'/d'} c_{d'}(r),
    \end{equation}
    where we used the fact that $c_{ab}(r)=c_a(r)c_b(r)$ if $\gcd(a,b)=1$. 
    
    The expression for $c_{d}(r)$ from the right-hand side of equation~\eqref{eq:Ramanujan} shows that $c_{d}(r)=0$ if and only if  $\mu(d / \gcd(r, d))=0$, which is equivalent to $d / \gcd(r, d)$ not being squarefree. In particular, $c_{2^m}(r)=0$ if and only if $m - \nu_2(r) \geq 2$. Thus, when condition (d) holds, equation~\eqref{eq:difference3} vanishes as well.

    The above argument shows that $\card{\bS_r(n, k)} = \card{N_r(n, k)}$ if any of conditions (a), (b), (c) or (d) hold.
    Suppose now that none of these conditions hold, that is, $\nu_2(n) \geq m\ge1$ and $m - \nu_2(r) < 2$. By Lemma~\ref{lem:Nr}, the sum in the right-hand side of equation~\eqref{eq:difference3} is precisely $n' \card{N_r(n', k')}$, which also equals
    $n' \card{\bS_r(n', k')}$ by Theorem~\ref{thm:odd_nk}, since $k'$ is odd. It follows that
    \begin{equation}
        \label{eq:penultimate}
        \card{N_r(n,k)}-\card{\bS_r(n,k)}=\frac{c_{2^m}(r)}{2^{m-1}} \card{N_r(n', k')}
        = \frac{c_{2^m}(r)}{2^{m-1}} \card{\bS_r(n', k')}.
    \end{equation}

    Finally, we can directly compute $c_{2^m}(r)$ using the closed-form expression in equation~\eqref{eq:Ramanujan}, noting that $$\gcd(2^m,r)=2^{\min(m,\nu_2(r))}=\begin{cases} 2^{m-1} & \text{if $\nu_2(r)=m-1$}, \\ 2^m & \text{otherwise}.
    \end{cases}$$
    Thus,
    \begin{align*}
        \frac{c_{2^m}(r)}{2^{m-1}} &= \frac{\mu \left( \frac{2^m}{\gcd(2^m, r)} \right) }{\phi \left( \frac{2^m}{\gcd(2^m, r)}\right)} \frac{\phi(2^m)}{2^{m-1}} = \begin{cases} \mu(2)/\phi(2) = -1 & \text{if $\nu_2(r)=m-1$}, \\ \mu(1)/\phi(1) = 1  & \text{otherwise}.
        \end{cases}
    \end{align*}
    Substituting in~\eqref{eq:penultimate}, we obtain the
    expression in equation~\eqref{eq:difference}.  
\end{proof}

\begin{proof}[Proof of Theorem~\ref{thm:Nr=S'r}] 
We know by Lemma~\ref{lem:trivial-conditions} that if any of conditions (a), (b), (c) or (d) hold, then $\card{\bS_r(n, k)} = \card{N_r(n, k)}$.
Let us assume that none of these conditions hold. In this case, Lemma~\ref{lem:trivial-conditions} tells us that
\begin{equation*}
    \card{N_r(n, k)} - \card{\bS_r(n, k)} =  \pm\card{N_r(n', k')},
\end{equation*}
with $n'$ and $k'$ given by equation~\eqref{eq:k'n'}, so it suffices to determine when this quantity is equal to zero.

If $0 < k < n$, then $0<k'<n'$, and
$\card{N_r(n', k')}\neq 0$ because the set $N_r(n', k')$ contains the primitive necklace consisting of $n'-k'$ zeros followed by $k'$ ones.

For $k = n$, the only necklace of length $n$ with $n$ ones has co-period $n$, and the only subset of $[n]$ with $n$ elements sums to $n(n + 1) / 2 \equiv n(n - 1) / 2 \equiv n/2 \pmod{n}$, using the assumption that $n$ is even since (a) does not hold. 
Therefore,
\begin{equation*}\label{eq:k=n}
    \card{N_r(n, n)} =
    \begin{cases}
        1 & \text{if } r = 0, \\
        0 & \text{otherwise};
    \end{cases} \qquad
    \card{\bS_r(n, n)} =
    \begin{cases}
        1 & \text{if } r = n/2, \\
        0 & \text{otherwise}.
    \end{cases}
\end{equation*}
These quantities are equal precisely when $r\notin\{0,n/2\}$ (condition (e)).
\end{proof}

\subsection{Special cases}
It was already observed by~\cite{Chan} that $\card{\bS_0(n, k)}=\card{N(n, k)}$ for any odd $n$ and any $0 \leq k \leq n$ (see Problem~\ref{prob:Chan}). As an
application of Theorem~\ref{thm:Nr=S'r} in the case $r=0$, we can determine precisely when this equality holds. 

\begin{corollary}
    \label{cor:r=0-gen}
    For $0\le k\le n$, we have $\card{\bS_0(n, k)} = \card{N(n, k)}$ if and only if $n$ is odd, $k$ is odd, or $\nu_2(n) < \nu_2(k)$ (including $k = 0$).
    Additionally, if $\nu_2(n) \geq \nu_2(k) > 0$, then
    \begin{equation*}
        \card{N(n, k)} - \card{\bS_0(n, k)} = \card{N_0(n / 2^{\nu_2(k)}, k / 2^{\nu_2(k)})} = \card{\bS_0(n / 2^{\nu_2(k)}, k / 2^{\nu_2(k)})}.
    \end{equation*}
\end{corollary}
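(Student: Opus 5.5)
This is a specialization of Theorem~\ref{thm:Nr=S'r} to $r=0$. I will check each of conditions (a)--(e) and the sign rule in equation~\eqref{eq:difference} under $r=0$, recalling that $N_0(n,k)=N(n,k)$ and $\nu_2(0)=\infty$.

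First, condition (d) reads $\nu_2(k)-\nu_2(0)\ge 2$. The left-hand side is $-\infty$ when $k>0$, so (d) never holds in that case. When $k=0$ we have $\nu_2(k)=\infty$ and the expression is indeterminate, but this case is already covered by (c), which the statement says includes $k=0$. So (d) may be dropped.

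Second, condition (e) requires $r\notin\{0,n/2\}$, which fails when $r=0$, so (e) may be dropped as well. The equivalence in the corollary is then exactly conditions (a), (b), (c). I should also confirm that the case $k=n$ even causes no trouble. There $\nu_2(n)=\nu_2(k)$, so (c) fails, and (e) fails too. This agrees with the explicit values in the proof of Theorem~\ref{thm:Nr=S'r}: $\card{N_0(n,n)}=1\neq 0=\card{\bS_0(n,n)}$. The corollary's difference formula also gives $\card{N_0(n',k')}=1$ with $n'=k'=n/2^{\nu_2(n)}$, which is consistent.

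Third, for the difference formula, suppose none of (a)--(c) hold. Then $n$ and $k$ are even and $k\ge1$, so $\nu_2(k)>0$, and $\nu_2(n)\ge\nu_2(k)$; this is exactly the hypothesis $\nu_2(n)\ge\nu_2(k)>0$. Equation~\eqref{eq:difference} applies with a negative sign only if $\nu_2(k)-\nu_2(r)=1$, which is impossible for $r=0$ since $\nu_2(0)=\infty$. The sign is therefore positive, giving the stated identity with both $N_0$ and $\bS_0$.

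The only delicate point is the interpretation of $\nu_2(0)=\infty$ in conditions (d) and the sign rule, particularly at $k=0$. I would handle this by stating explicitly that $k=0$ falls under (c). Alternatively, one can verify directly from equation~\eqref{eq:penultimate} that $c_{2^m}(0)=\phi(2^m)=2^{m-1}$, so the factor there equals $+1$. This confirms the positive sign without any appeal to conventions about infinity.
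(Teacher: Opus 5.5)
Your proposal is correct and takes the same approach as the paper, which obtains this corollary by specializing Theorem~\ref{thm:Nr=S'r} to $r=0$. Conditions (d) and (e) drop out, with $k=0$ absorbed into (c), and the sign in \eqref{eq:difference} is positive because $\nu_2(k)-\nu_2(0)\neq 1$; equivalently, $c_{2^m}(0)/2^{m-1}=\phi(2^m)/2^{m-1}=1$, as you note.
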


We can rephrase Corollary~\ref{cor:r=0-gen} by saying that 
 $\card{S_0^+(n, k)}=\card{N(n, k)}$ for all $0\le k\le n$, where 
\begin{equation*}
    S_0^+(n, k)
    =
    \begin{cases}
        \bS_0(n, k) & \text{if $n$ or $k$ are odd, or $\nu_2(n) < \nu_2(k)$,} \\
        \bS_0(n, k) \cup \bS_0(n / 2^{\nu_2(k)}, k / 2^{\nu_2(k)}) & \text{otherwise}.
    \end{cases}
\end{equation*}

Similarly, in the case $r=1$, Theorem~\ref{thm:Nr=S'r} 
gives the following, where the conditions $\nu_2(k)\ge2$ and $\nu_2(k)=1$ have been stated as $4\mid k$ and $k\equiv 2\bmod 4$, respectively.

\begin{corollary}
    \label{cor:r=1-gen}
    For $0\le k\le n$, we have $\card{\bS_1(n, k)} = \card{L(n, k)}$ if and only if any of the following conditions hold:
    \begin{enumerate}[(a)]
        \item $n$ is odd,
        \item $k$ is odd,
        \item $\nu_2(n) < \nu_2(k)$ (including $k = 0$),
        \item $4\mid k$,
        \item $k = n$ is even and greater than $2$.
    \end{enumerate}
    Additionally, if $n$ is even and $k\equiv 2\bmod 4$, then 
    $\card{L(n, k)} - \card{\bS_1(n, k)} = -\card{L(n/2, k/2)}$.
\end{corollary}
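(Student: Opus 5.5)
This corollary is Theorem~\ref{thm:Nr=S'r} specialized to $r=1$. The plan is to translate each condition and the difference formula into this setting, using $N_1(n,k)=L(n,k)$ throughout.

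For $r=1$ we have $\nu_2(r)=0$. Conditions (a), (b), (c) of Theorem~\ref{thm:Nr=S'r} carry over verbatim. Condition (d) becomes $\nu_2(k)\ge 2$, i.e.\ $4\mid k$; here $k=0$ has $\nu_2(0)=\infty$ and is already covered by (c). Condition (e) reads: $k=n$ is even and $1\notin\{0,n/2\}$, i.e.\ $n\neq 2$. Since $n=k$ is even and positive, this is the same as $k=n>2$. This gives the equivalence.

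For the difference formula, assume $n$ is even and $k\equiv 2\bmod 4$. Then $\nu_2(k)=1$, so (b) and (d) fail, and (a) fails since $n$ is even. Condition (c) also fails, because $\nu_2(n)\ge1=\nu_2(k)$. Condition (e) need not fail (e.g.\ $k=n=6$), but this does not matter. The proof of Theorem~\ref{thm:Nr=S'r} shows, via Lemma~\ref{lem:trivial-conditions}, that equation~\eqref{eq:difference} holds whenever (a)--(d) fail. Since $\nu_2(k)-\nu_2(r)=1$, the sign is negative, giving
$$\card{L(n,k)}-\card{\bS_1(n,k)}=-\card{N_1(n/2,k/2)}=-\card{L(n/2,k/2)}.$$

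The only delicate points are the bookkeeping that (e) does not invalidate the difference formula and the interpretation of (d) when $k=0$; neither is a real obstacle. As a sanity check, this agrees with Corollary~\ref{cor:S'1=L} and the definition of $L^+$.
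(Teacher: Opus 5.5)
Your proposal is correct and takes the same approach as the paper: you specialize Theorem~\ref{thm:Nr=S'r} to $r=1$, so that (d) becomes $4\mid k$, (e) becomes $k=n>2$, and the negative sign comes from $\nu_2(k)-\nu_2(1)=1$. You are also right to take the difference formula from Lemma~\ref{lem:trivial-conditions}, which only needs (a)--(d) to fail, rather than from the statement of the theorem; this covers cases such as $k=n=6$ where (e) holds, which the paper's one-line derivation passes over.
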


In other words, we have $\card{\bS_1(n, k)} = \card{L^+(n, k)}$ for all $0\le k\le n$, recovering Corollary~\ref{cor:S'1=L},

By summing the formulas in Lemmas~\ref{lem:S'r} and~\ref{lem:Nr} over all $k$, we obtain
    \begin{align*}
        \card{S_r(n)} &= \frac{1}{n} \sum_{d \mid n} (1 - (-1)^d)^{n/d} c_d(r),\\
        \card{N_r(n)} &= \frac{1}{n} \sum_{d \mid n} 2^{n / d} c_d(r).
    \end{align*}
Taking the difference, we get
\begin{equation}\label{eq:Nr-S}
        \card{N_r(n)} - \card{\bS(n)}
            = \frac{1}{n} \sum_{\substack{d \mid n \\ d \text{ even}}} 2^{n/d} c_d(r),
\end{equation}
which vanishes if $n$ is odd, as we noted in Theorem~\ref{thm:odd_nk}.

A consequence of equation~\eqref{eq:Nr-S} is that, for any given $n$, the difference $\card{N_r(n)} - \card{\bS_r(n)}$ can only take as many distinct values as the number of divisors of $n$. This is because the only appearance of $r$ is in the factor $c_d(r)$, which, by equation~\eqref{eq:Ramanujan}, is determined by $\gcd(r,n)$. See Table~\ref{tab:diff-sum} for an example.

\begin{table}[h]
\resizebox{\textwidth}{!}{
\begin{tabular}{r|rrrrrrrrrrrrrrrrrrrrr}
$n \backslash r$ & 0 & 1 & 2 & 3 & 4 & 5 & 6 & 7 & 8 & 9 & 10 & 11 & 12 & 13 & 14 & 15 & 16 & 17 & 18 & 19 & 20 \\
\hline
1 & 0 & 0 &  &  &  &  &  &  &  &  &  &  &  &  &  &  &  &  &  &  &  \\
2 & 1 & -1 & 1 &  &  &  &  &  &  &  &  &  &  &  &  &  &  &  &  &  &  \\
3 & 0 & 0 & 0 & 0 &  &  &  &  &  &  &  &  &  &  &  &  &  &  &  &  &  \\
4 & 2 & -1 & 0 & -1 & 2 &  &  &  &  &  &  &  &  &  &  &  &  &  &  &  &  \\
5 & 0 & 0 & 0 & 0 & 0 & 0 &  &  &  &  &  &  &  &  &  &  &  &  &  &  &  \\
6 & 2 & -1 & 1 & -2 & 1 & -1 & 2 &  &  &  &  &  &  &  &  &  &  &  &  &  &  \\
7 & 0 & 0 & 0 & 0 & 0 & 0 & 0 & 0 &  &  &  &  &  &  &  &  &  &  &  &  &  \\
8 & 4 & -2 & 1 & -2 & 2 & -2 & 1 & -2 & 4 &  &  &  &  &  &  &  &  &  &  &  &  \\
9 & 0 & 0 & 0 & 0 & 0 & 0 & 0 & 0 & 0 & 0 &  &  &  &  &  &  &  &  &  &  &  \\
10 & 4 & -3 & 3 & -3 & 3 & -4 & 3 & -3 & 3 & -3 & 4 &  &  &  &  &  &  &  &  &  &  \\
11 & 0 & 0 & 0 & 0 & 0 & 0 & 0 & 0 & 0 & 0 & 0 & 0 &  &  &  &  &  &  &  &  &  \\
12 & 8 & -5 & 4 & -6 & 6 & -5 & 4 & -5 & 6 & -6 & 4 & -5 & 8 &  &  &  &  &  &  &  &  \\
13 & 0 & 0 & 0 & 0 & 0 & 0 & 0 & 0 & 0 & 0 & 0 & 0 & 0 & 0 &  &  &  &  &  &  &  \\
14 & 10 & -9 & 9 & -9 & 9 & -9 & 9 & -10 & 9 & -9 & 9 & -9 & 9 & -9 & 10 &  &  &  &  &  &  \\
15 & 0 & 0 & 0 & 0 & 0 & 0 & 0 & 0 & 0 & 0 & 0 & 0 & 0 & 0 & 0 & 0 &  &  &  &  &  \\
16 & 20 & -16 & 14 & -16 & 17 & -16 & 14 & -16 & 18 & -16 & 14 & -16 & 17 & -16 & 14 & -16 & 20 &  &  &  &  \\
17 & 0 & 0 & 0 & 0 & 0 & 0 & 0 & 0 & 0 & 0 & 0 & 0 & 0 & 0 & 0 & 0 & 0 & 0 &  &  &  \\
18 & 30 & -28 & 28 & -29 & 28 & -28 & 29 & -28 & 28 & -30 & 28 & -28 & 29 & -28 & 28 & -29 & 28 & -28 & 30 &  &  \\
19 & 0 & 0 & 0 & 0 & 0 & 0 & 0 & 0 & 0 & 0 & 0 & 0 & 0 & 0 & 0 & 0 & 0 & 0 & 0 & 0 &  \\
20 & 56 & -51 & 48 & -51 & 54 & -52 & 48 & -51 & 54 & -51 & 48 & -51 & 54 & -51 & 48 & -52 & 54 & -51 & 48 & -51 & 56
\end{tabular}}
\caption{The differences $\card{N_r(n)} - \card{\bS_r(n)}$ for $1 \leq n \leq 20$ and $0 \leq r \leq n$.}
    \label{tab:diff-sum}
\end{table}

\section{Generalizations to $q$-ary necklaces}\label{sec:q}

This paper has focused on binary necklaces and subsets, following the approach from \cite{Baker,subsetmodprime,jpl}. However, for any integer $q\ge2$, one can define necklaces over the alphabet $\{0,1,\dots,q-1\}$, sometimes called {\em $q$-ary} or {\em $q$-colored necklaces}. In~\cite{Chan}, Chan shows that, if $\gcd(q,n)=1$, then the number of $q$-ary necklaces of length $n$ equals the number of multisets of $[n]$ whose sum is divisible by $n$ and where the multiplicity of each element is less than $q$. 
This generalizes the fact that $\card{\bS_0(n)}=\card{N(n)}$ for odd $n$, as discussed in Section~\ref{sec:r=0}. 
In fact, Chan gives a bijective proof of his equality when $q$ is a prime power. This bijection has recently been generalized by Li and Zhou~\cite{LZ} to any $q$ such that $\gcd(q,n)=1$.

In this section we briefly discuss how some of our generalized results involving the parameters $r$ and $k$ can be extended from binary necklaces to $q$-ary necklaces. For this extension to work, the interpretation of the parameter $k$ that we use on $q$-ary necklaces is the sum of the entries, viewed as integers between $0$ and $q-1$. 

\begin{definition} 
For any integers $0\le k\le n$, $0\le r< n$, and $q\ge2$, define the following sets:
\begin{itemize}
    \item $\bS_r^q(n,k)$ is the set of $k$-element multisets of $[n]$ that sum to $r\bmod n$ where each element has multiplicity less than $q$.
    \item $N_r^q(n,k)$ is the set of necklaces over $\{0,1,\dots,q-1\}$ whose entries sum to $k$ and whose co-period divides $r$.
\end{itemize}
\end{definition}

Next we generalize Lemmas~\ref{lem:S'r} and~\ref{lem:Nr} to arbitrary $q$. It is more convenient to give expressions for the generating functions where we sum over~$k$.

\begin{theorem}
    \label{thm:q-S}
    For any $0\le r< n$ and any $q \ge 2$,
    \begin{equation*}
        \sum_{k=0}^n \card{\bS_r^q(n, k)} x^k
        =
        \frac{1}{n}
        \sum_{d \mid n} c_{d}(r)
            \left(
                \frac{\left(1 - x^{\frac{qd}{\gcd(q, d)}}\right)^{\gcd(q, d)}}{1 - x^d}
            \right)^{n/d}.
    \end{equation*}
\end{theorem}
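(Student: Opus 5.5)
The plan is to repeat the roots-of-unity filter from Lemma~\ref{lem:S'r}, replacing each factor $1+xt^j$ by the factor for multiplicities $0,\dots,q-1$. Consider
\[
\bar f_q(x,t)=\prod_{j=1}^n\left(1+xt^j+x^2t^{2j}+\dots+x^{q-1}t^{(q-1)j}\right)=\prod_{j=1}^n\frac{1-x^qt^{qj}}{1-xt^j}.
\]
This polynomial enumerates multisets of $[n]$ with multiplicities less than $q$ by size (exponent of $x$) and sum (exponent of $t$). Its $x$-degree is $n(q-1)$, so the left-hand side sum should be read over all $k$ for which the coefficient is nonzero; this is harmless. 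Filtering with a primitive $n$th root of unity $\nroot$ gives
\[
\sum_k\card{\bS_r^q(n,k)}x^k=\frac1n\sum_{\ell=0}^{n-1}\nroot^{-r\ell}\,\bar f_q(x,\nroot^\ell).
\]

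Next, fix $\ell$ and let $\omega=\nroot^\ell$, whose order is $m=n/\gcd(n,\ell)$. The $n$ values $j=1,\dots,n$ cover each residue class mod $m$ exactly $n/m$ times, so
\[
\bar f_q(x,\omega)=\left(\prod_{s=0}^{m-1}\frac{1-x^q\omega^{qs}}{1-x\omega^s}\right)^{n/m}.
\]
The denominator equals $\prod_s(1-x\omega^s)=1-x^m$, by the same substitution into $z^m-1=\prod_s(z-\omega^s)$ used in Lemma~\ref{lem:S1}.

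The numerator is the only new point. Set $g=\gcd(q,m)$. Then $\omega^q$ is a primitive root of unity of order $m/g$, and as $s$ runs over $0,\dots,m-1$, the power $\omega^{qs}$ runs over all $(m/g)$th roots of unity, each exactly $g$ times. Hence
\[
\prod_s(1-x^q\omega^{qs})=\left(1-x^{qm/g}\right)^{g}.
\]
Verifying this count carefully is the step I would check most closely: the map $s\mapsto qs \bmod m$ has image $g\mathbb{Z}/m\mathbb{Z}$ and fibers of size $g$. This gives
\[
\bar f_q(x,\omega)=\left(\frac{\left(1-x^{qm/\gcd(q,m)}\right)^{\gcd(q,m)}}{1-x^m}\right)^{n/m}.
\]

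Finally, group the terms by $\gcd(n,\ell)=n/d$, so that $m=d$ runs over the divisors of $n$. Within each group $\bar f_q$ is constant. As in Lemma~\ref{lem:S'r}, the inner sum $\sum_{\gcd(n,\ell)=n/d}\nroot^{-r\ell}$ is the sum of the $r$th powers of the primitive $d$th roots of unity, which is $c_d(r)$ because negating $\ell$ permutes those roots. Substituting gives exactly the stated formula.
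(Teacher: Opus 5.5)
Your proof is correct and follows the paper's route: the same roots-of-unity filter applied to $\prod_{j=1}^{n}\frac{1-x^qt^{qj}}{1-xt^j}$, followed by grouping by $\gcd(n,\ell)$. The only difference is in the numerator. You count the fibers of $s\mapsto qs \bmod m$ over one period of $\omega$, whereas the paper uses the order $n/\gcd(n,\ell q)$ of $\zeta_n^{\ell q}$ together with the identity $\gcd(n,\ell q)=\gcd(n,\ell)\,\gcd\!\left(n/\gcd(n,\ell),q\right)$.

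Your remark that the sum over $k$ must run up to $n(q-1)$ rather than $n$ is also right. For $q\ge 3$ the identity fails as literally stated: for example, $n=1$, $q=3$ gives left side $1+x$ and right side $1+x+x^2$.
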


\begin{proof}
Generalizing the proof of Lemma~\ref{lem:S'r}, consider now the generating function 
    \begin{equation*}
        \bar{f}_q(x,t) = \prod_{j = 1}^n (1 + x t^j+x^2t^{2j}+\dots+x^{q-1}t^{(q-1)j}) = \prod_{j = 1}^n \frac{1-x^qt^{qj}}{1-zt^j}
    \end{equation*}
    for multisets of $[n]$ where each element has multiplicity less than $q$, with respect to their size and their sum. Letting $\nroot$ be a primitive $n$th root of unity, we have
    \begin{equation}
        \label{eqn:big-sum}
        \sum_{k=0}^n \card{\bS_r^q(n, k)} x^k
        =
        \frac{1}{n} \sum_{\ell = 0}^{n-1} \nroot^{-r\ell} \bar{f}_q(x,\nroot^{\ell}).
    \end{equation}
    For any given $\ell$, we can evaluate $\bar{f}_q(x, \nroot^\ell)$ directly:
    \[
        \bar{f}_q(x, \nroot^\ell)
        = \prod_{j = 1}^n (1-x^q\nroot^{\ell qj}) \prod_{j = 1}^n (1-z\nroot^{\ell j})^{-1}.
    \]
    For the first product, if we set $m \coloneqq n / \gcd(n, \ell q)$, then $w \coloneqq
    \nroot^{\ell q}$ is a root of unity of order $m$, so the product is
    \[
             \prod_{j = 1}^n (1 - x^q w^j)
            = \left(\prod_{s = 0}^{m-1} (1 - x^q w^s)\right)^{n/m}
            =\left(1 - x^{qm}\right)^{n/m},
    \]
    as in the proof of Lemma~\ref{lem:S'r}.

    The second product is handled similarly, with $m' \coloneqq n / \gcd(n, \ell)$ and $w' \coloneqq
    \nroot^{\ell}$ playing the roles of $m$ and $w$. We obtain
    \[
        \bar{f}_q(x, \nroot^\ell)
        = \frac{\left(1 - x^{qm}\right)^{n/m}}{\left(1 - x^{m'}\right)^{n/m'}}
        = \frac{ \left(1 - x^{\frac{qn}{\gcd(n, \ell q)}}\right)^{\gcd(n, \ell q)}}{\left(1 - x^{\frac{n}{\gcd(n, \ell)}}\right)^{\gcd(n, \ell)}}.
    \]
    Letting $d = \gcd(n, \ell)$, and noting that $
        \gcd(n, \ell q) = d\, \gcd \left( n/d, q\right)$,
    we can write
    \begin{equation*}
        \bar{f}_q(x, \nroot^\ell)
        = \left(\frac{\left(1 - x^{\frac{qn/d}{\gcd(n/d, q)}}\right)^{\gcd(n/d, q)}}{1 - x^{n / d}} \right)^d.
    \end{equation*}
    Plugging this into equation~\eqref{eqn:big-sum} and grouping according to $d = \gcd(n, \ell)$, we obtain
    \begin{equation*}
        \sum_{k=0}^n \card{\bS_r^q(n, k)} x^k
        =
        \frac{1}{n} \sum_{d \mid n} \left(\frac{\left(1 - x^{\frac{qn/d}{\gcd(n/d, q)}}\right)^{\gcd(n/d, q)}}{1 - x^{n / d}} \right)^d \left( \sum_{\substack{\ell = 0 \\ \gcd(n, \ell) = d}}^{n-1}  \nroot^{-r\ell} \right).
    \end{equation*}
    As before, the inner sum equals $c_{n/d}(r)$. Finally, if we replace $d$ with $n / d$, we obtain the stated expression.
\end{proof}

\begin{theorem}
    \label{thm:q-N}
    For any $0\le r< n$ and any $q \ge 2$,
    \begin{equation*}
    \sum_{k=0}^n \card{N_r^q(n, k)} x^k
        = \frac{1}{n} \sum_{d \mid n} c_{d}(r)
        \left(\frac{1 - x^{qd}}{1 - x^d}\right)^{n/d}
    \end{equation*}
\end{theorem}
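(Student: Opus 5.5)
We count $q$-ary words by co-period and weight, then divide by $n$ after Möbius-type inversion, mirroring the binary case of Lemma~\ref{lem:Nr}. Let $W(n;x)=\left(\frac{1-x^q}{1-x}\right)^n=(1+x+\dots+x^{q-1})^n$ be the weight generating function of all $q$-ary words of length $n$. Let $P(n;x)$ be the generating function of the primitive words of length $n$ by weight. Every word of length $n$ is uniquely $v^{j}$ with $v$ primitive of length $n/j$, and its weight is $j$ times that of $v$. Hence
\[
W(n;x)=\sum_{j\mid n} P(n/j;x^{j}).
\]
Möbius inversion (in the multiplicative variable $j$, with $x\mapsto x^j$ substitution compatible since the relation holds for all lengths) gives
\[
P(n;x)=\sum_{j\mid n}\mu(j)\,W(n/j;x^j).
\]

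Next, a necklace of length $n$ with co-period $j$ corresponds to $n/j$ distinct words of the form $v^j$, with $v$ primitive of length $n/j$. Therefore the generating function of necklaces with co-period exactly $j$ is $\frac{j}{n}P(n/j;x^j)$, and
\[
\sum_{k}\card{N_r^q(n,k)}x^k=\frac1n\sum_{j\mid \gcd(n,r)} j\,P(n/j;x^j),
\]
using that $j\mid r$ and $j\mid n$ (with $r=0$ meaning every $j\mid n$ is allowed, consistent with $\gcd(n,0)=n$).

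Substituting the inversion formula and writing $d=ji$ gives
\[
\frac1n\sum_{j\mid\gcd(n,r)}j\sum_{i\mid n/j}\mu(i)W(n/(ji);x^{ji})=\frac1n\sum_{d\mid n}W(n/d;x^d)\sum_{j\mid \gcd(d,r)}j\,\mu(d/j).
\]
The condition $j\mid\gcd(n,r)$ together with $j\mid d\mid n$ is equivalent to $j\mid\gcd(d,r)$. By equation~\eqref{eq:Ramanujan}, the inner sum is exactly $c_d(r)$. Since $W(n/d;x^d)=\left(\frac{1-x^{qd}}{1-x^d}\right)^{n/d}$, this is the claimed formula.

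The only delicate points are the index exchange in this last step and the convention for $r=0$. For $r=0$ we need $\gcd(d,0)=d$, which gives $c_d(0)=\phi(d)$, matching Burnside's count. The rest is routine, and it is the same argument as Hadjicostas's proof cited for Lemma~\ref{lem:Nr}, with binomials replaced by $W$. Note also that the sum formally extends to $k$ beyond $n$, up to $(q-1)n$; the identity holds coefficientwise regardless of range.
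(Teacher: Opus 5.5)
Your proof is correct, but it follows a different route from the paper. The paper does no word counting. It quotes Hadjicostas's multivariate identity $\sum_{\mathbf{n}\neq\mathbf{0}} M_r(\mathbf{n})\prod_i x_i^{n_i} = -\sum_{d\ge1}\frac{c_d(r)}{d}\log\bigl(1-x_0^d-\cdots-x_{q-1}^d\bigr)$, specializes $x_i=yx^i$, and extracts the coefficient of $y^n$ using $-\log(1-z)=\sum_{j\ge1} z^j/j$.

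You instead derive the formula from scratch. Unique factorization $w=v^j$ gives $W(n;x)=\sum_{j\mid n}P(n/j;x^j)$. M\"obius inversion then gives $P$. Each necklace of co-period $j$ contains exactly $n/j$ words. After re-indexing by $d=ji$, the Ramanujan sum appears exactly in its divisor form $\sum_{j\mid\gcd(d,r)} j\,\mu(d/j)$ from equation~\eqref{eq:Ramanujan}.

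The paper's argument is shorter and carries the finer refinement by letter multiplicities, but it rests on an external theorem. Yours is self-contained and explains why $c_d(r)$ appears: it comes from combining the constraint that the co-period divide $r$ with M\"obius inversion. Specialized to $q=2$ with the coefficient of $x^k$ extracted, it also proves Lemma~\ref{lem:Nr} without citing Hadjicostas.

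Two small remarks. First, replace your informal justification of the inversion step with the one-line check $\sum_{j\mid n}\mu(j)W(n/j;x^j)=\sum_{m\mid n}P(n/m;x^m)\sum_{j\mid m}\mu(j)=P(n;x)$. Second, your observation that the left-hand side must run over all $k$ up to $(q-1)n$ is correct, and the same correction applies to Theorem~\ref{thm:q-S}.
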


\begin{proof}
    For any $q$-tuple of nonnegative integers $\mathbf{n} = (n_0, n_1, \dots,
    n_{q - 1})$, let $M_r(\mathbf{n})$ be the number of $q$-ary necklaces of
    length $|\mathbf{n}| = n_0 + n_1 + \cdots + n_{q - 1}$
    with $n_i$ copies of $i$ for every $i$, and whose co-period divides
    $r$. This is related to our generating function via the equation
    \begin{equation}\label{eq:NM}
        \sum_{k=0}^n \card{N_r^q(n, k)} x^k = \sum_{|\mathbf{n}| = n} M_r(\mathbf{n})\, x^{n_1 + 2 n_2 + \cdots + (q - 1) n_{q - 1}}.
    \end{equation}

    In \cite[Thm.~1, eq.~(18)]{hadji-necklaces}, 
    Hadjicostas shows that
    \begin{equation*}
        \sum_{\mathbf{n} \neq \mathbf{0}} M_r(\mathbf{n}) \prod_{i = 0}^{q - 1} x_i^{n_i} = - \sum_{d \geq 1} \frac{c_d(r)}{d} \log(1 - x_0^d - x_1^d - \cdots - x_{q - 1}^d).
    \end{equation*}
    Setting $x_i = yx^i$ for all $i$, this equation becomes
    \[
        \sum_{\mathbf{n} \neq \mathbf{0}} M_r(\mathbf{n})\, y^{|\mathbf{n}|} x^{n_1 + 2 n_2 + \cdots + (q - 1) n_{q - 1}}
        = - \sum_{d \geq 1} \frac{c_d(r)}{d} \log\left(1 - y^d \frac{1 - x^{qd}}{1 - x^d}\right).
    \]
    Extracting the coefficient of $y^n$ on both sides, using equation~\eqref{eq:NM} and the series expansion $-\log(1-z)=\sum_{j\ge1}z^j/j$, we obtain
    \[
        \sum_{k=0}^n \card{N_r^q(n, k)} x^k 
        =
        \sum_{d\mid n} \frac{c_d(r)}{d} \frac{1}{n/d} \left(\frac{1 - x^{qd}}{1 - x^d}\right)^{n/d}
        =
        \frac{1}{n} \sum_{d\mid n} c_d(r) \left(\frac{1 - x^{qd}}{1 - x^d}\right)^{n/d}.\qedhere
    \]
\end{proof}

The following result generalizes Theorem~\ref{thm:odd_nk} in the case of odd $n$.

\begin{corollary}\label{cor:q-ary}
    If $\gcd(q,n)=1$, then
    $$\card{\bS_r^q(n, k)} = \card{N_r^q(n,k)}.$$ 
\end{corollary}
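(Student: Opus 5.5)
The plan is to compare the two generating functions from Theorems~\ref{thm:q-S} and~\ref{thm:q-N} term by term. Both are written as $\frac{1}{n}\sum_{d\mid n} c_d(r)\,(\cdots)^{n/d}$. The only difference between them is the factor inside the parentheses:
\[
\frac{\left(1 - x^{\frac{qd}{\gcd(q, d)}}\right)^{\gcd(q, d)}}{1 - x^d}
\qquad\text{versus}\qquad
\frac{1 - x^{qd}}{1 - x^d}.
\]
So it suffices to show that these two expressions coincide for every divisor $d$ of $n$ whenever $\gcd(q,n)=1$.

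This is immediate. If $\gcd(q,n)=1$, then every divisor $d$ of $n$ also satisfies $\gcd(q,d)=1$. Therefore $\frac{qd}{\gcd(q,d)}=qd$ and the outer exponent $\gcd(q,d)$ equals $1$, so the two bracketed factors are literally identical for each $d\mid n$. The two right-hand sides then agree as formal power series (indeed as polynomials, since each bracket is $1+x^d+\dots+x^{(q-1)d}$). Extracting the coefficient of $x^k$ on both sides gives $\card{\bS_r^q(n,k)}=\card{N_r^q(n,k)}$ for every $k$.

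There is no real obstacle here; the corollary is a direct consequence of the two theorems. The only point to check is that the range of $k$ is consistent on the two sides. Both generating functions are polynomials of degree $n(q-1)$, while the sums in the theorems are written with $k$ up to $n$, so I would note that the identity holds for all coefficients. In particular it holds for $0\le k\le n$ as stated. One should also observe that the typo $z$ in place of $x$ in the proof of Theorem~\ref{thm:q-S} does not affect the formula being used.
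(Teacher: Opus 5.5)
Your proposal is correct and is exactly the paper's argument: since $\gcd(q,n)=1$ forces $\gcd(q,d)=1$ for every $d\mid n$, the generating functions in Theorems~\ref{thm:q-S} and~\ref{thm:q-N} coincide term by term, and comparing coefficients of $x^k$ gives the equality. Your side remarks are accurate and do not affect the argument. Both sides are really polynomials of degree $n(q-1)$, so the sums over $k$ should run past $n$, and the $z$ in the proof of Theorem~\ref{thm:q-S} is a typo for $x$.
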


\begin{proof}
    If $\gcd(q, n) = 1$, then $\gcd(q, d) = 1$ for all $d \mid n$, and the formulas in Theorems \ref{thm:q-S} and
    \ref{thm:q-N} are identical in this case.
\end{proof}

\section{Open questions}\label{sec:open}

As is the case for many of the results which inspired this work, finding bijective proofs of our results seems challenging. We end by listing some specific open questions. The first asks for a bijective proof of Theorem~\ref{thm:S=CVP}.

\begin{problem}
    Find a bijection between $S_1(n,k-1)$ and $\CVP(n,k)$ for $1\le k\le n$.
\end{problem}

Using the conditions in Corollary~\ref{cor:r=0-gen}, we can pose the following slight generalization of Problem~\ref{prob:Chan}.

\begin{problem}\label{prob:r=0}
    Find a bijection between $\bS_0(n, k)$ and $N(n, k)$ when $n$ or $k$ are odd, or $\nu_2(n)<\nu_2(k)$.
\end{problem}

The next question asks for a bijective proof of Corollary~\ref{cor:S'1=L}.

\begin{problem}\label{prob:r=1}
    Find a bijection between $\bS_1(n,k)$ and $L^+(n,k)$ for $n\ge2$ and $0\le k\le n$. Interesting special cases include a bijection between $\bS_1(n,k)$ and $L(n,k)$ when any of the conditions (a)--(e) in Corollary~\ref{cor:r=1-gen} hold, or a bijection between $\bS_1(n)$ and $L(n)$ when $n$ is odd.
\end{problem}

Problems~\ref{prob:r=0} and~\ref{prob:r=1} can be generalized to arbitrary $r$ as follows.

\begin{problem}
    Find a bijection between $\bS_r(n,k)$ and $N_r(n,k)$ for $0\le k\le n$ and
    $0\le r<n$ satisfying any of the conditions (a)--(e) in Theorem~\ref{thm:Nr=S'r}.
\end{problem}

When $r=1$, the diagram in Figure~\ref{fig:diagram} relates subsets, necklaces and permutations. By Corollary~\ref{cor:r=0-gen}, an analogue for $r = 0$ would have $S_0^+(n, k)$ and $N(n,k)$ playing the roles of $S'_1(n,k)$ and $L^+(n,k)$, respectively.

\begin{problem}
    Are there analogues of the diagram in Figure~\ref{fig:diagram} for arbitrary $r$? Specifically, are there sets of permutations playing the
    role of $\CVP(n, k)$?
\end{problem}

Our last problem concerns the generalization of Theorem~\ref{thm:Nr=S'r} to $q$-ary necklaces.

\begin{problem}\label{prob:q}
    Describe necessary and sufficient conditions on $n,k,r,q$ for which the equality $\card{\bS_r^q(n,k)}=\card{N_r^q(n,k)}$ holds.
\end{problem}

Corollary~\ref{cor:q-ary} gives the sufficient condition $\gcd(q,n)=1$.

\subsection*{Acknowledgments} 

The authors thank Matt Baker and Matthew Qian for bringing their conjecture to
our attention. The authors also thank Natasha Ter-Saakov for helpful
suggestions. Elizalde was partially supported by Simons Collaboration Grant
\#929653.

\bibliographystyle{plain}
\bibliography{subsets_necklaces}

\end{document}